\newtheorem{thm}{Theorem}[section]
\newtheorem{cor}[thm]{Corollary}
\newtheorem{prop}[thm]{Proposition}
\newtheorem{lem}[thm]{Lemma}
\theoremstyle{definition}
\newtheorem{defn}[thm]{Definition}
\newtheorem{exmp}[thm]{Example}
\newtheorem{rmk}[thm]{Remark}
\newcommand{\Out}{\operatorname{Out}}
\newcommand{\Aut}{\operatorname{Aut}}
\newcommand{\sgn}{\mathrm{sgn}}
\newcommand{\Conf}{\operatorname{Conf}}
\newcommand{\into}{\hookrightarrow}
\newcommand{\Gp}{\mathcal{G}}               %%%%%%%%%% THE GROUP
\newcommand{\Q}{\mathbb{Q}}
\newcommand{\R}{\mathbb{R}}
\DeclareMathOperator{\st}{st}
\newcommand{\ol}[1]{\overline{#1}}
\newcommand{\F}{\mathcal{F}}
\DeclareMathOperator{\CV}{CV}
\newcommand{\ul}[1]{\underline{#1}}
\newcommand{\Ho}{\mathrm{H}}
\newcommand{\Hc}{\Ho_c}
\newcommand{\GC}{\mathrm{GC}}
\newcommand{\GH}{\mathrm{GH}}
\newcommand{\rH}{\widetilde{\Ho}}
\DeclareMathOperator{\stab}{stab}
\newcommand{\A}{\mathcal{A}}
\newcommand{\U}{\mathcal{U}}
\newcommand{\G}{{G}}                %%%%%%%%% THE GRAPH
\renewcommand{\P}{\mathcal{P}}
\newcommand{\Top}{\mathsf{Top}}
\DeclareMathOperator{\diag}{diag}
\newcommand{\isoto}{\xrightarrow{\sim}}
\DeclareMathOperator{\inv}{inv}
\DeclareMathOperator{\im}{im}
\DeclareMathOperator{\Vect}{Vect}
\DeclareMathOperator{\GrphCat}{Grph}
\DeclareMathOperator{\Iso}{Iso}
\newcommand{\V}{\mathbb{V}}
\newcommand{\col}{\colon}
\newcommand{\Specht}[1]{\chi_{#1}}
\newcommand{\op}{\mathrm{op}}
\newcommand{\cA}{\mathcal{A}}
\newcommand{\cF}{\mathcal{F}}
\newcommand{\cM}{\mathcal{M}}
\newcommand{\cU}{\mathcal{U}}
\newcommand{\can}{%\newcommand{\can}[1][.25]{
\begin{tikzpicture}[scale=.25,baseline=0pt]
\tikzstyle{every node}=
[draw,circle,fill=black,minimum size=.5pt,inner sep=0pt];
\node (bl) at (0,0) {};
\node (br) at (0,.7) {};
\node (tl) at (.7,0) {};
\node (tr) at (.7,.7) {};
\draw[-] (tl) edge[bend right=30] (tr);
\draw[-] (tl) edge[bend left=30] (tr);
\draw[-] (bl) edge[bend right=30] (br);
\draw[-] (bl) edge[bend left=30] (br);
\draw[-] (bl) edge (tl);
\draw[-] (br) edge (tr);
\end{tikzpicture}
}
\newcommand{\goggles}{%[1][.25]{
\begin{tikzpicture}[scale=.25,baseline=-1pt]
\tikzstyle{every node}=
[draw,circle,fill=black,minimum size=.5pt,inner sep=0pt];
\node (l) at (0,0) {};
\node (c) at (0.5,0.8) {};
\node (r) at (1,0) {};
\draw[-] (l)--(r);
\draw[-] (l) edge[bend right=25] (c);
\draw[-] (r) edge[bend right=25] (c);
\draw[-] (l) edge[bend left=25] (c);
\draw[-] (r) edge[bend left=25] (c);
\end{tikzpicture}
}
\newcommand{\banana}{%[1][.3]{
\begin{tikzpicture}[scale=.3]%,baseline=-3pt]
\tikzstyle{every node}=
[draw,circle,fill=black,minimum size=.5pt,inner sep=0pt];
\node (0) at (0,0) {};
\node (1) at (0,.7) {};
\draw[-] (0) edge[bend right=85, min distance=5mm] (1);
\draw[-] (0) edge[bend right=45] (1);
\draw[-] (0) edge[bend left=45] (1);
\draw[-] (0) edge[bend left=85, min distance=5mm] (1);
\end{tikzpicture}
}
\newcommand{\kfour}{%[1][.2]{
\begin{tikzpicture}[scale=.2]%,baseline=0pt]
\tikzstyle{every node}=
[draw,circle,fill=black,minimum size=.5pt,inner sep=0pt];
\node (bl) at (0,0) {};
\node (br) at (0.606,-0.35) {};
\node (tl) at (-0.606,-0.35) {};
\node (tr) at (0,0.7) {};
\draw[-] (bl)--(br)--(tr)--(tl)--(bl)--(tr);
\draw[-] (tl)--(br);
\end{tikzpicture}
}
\newcommand{\bigcan}{
\begin{tikzpicture}[scale=.9]
\tikzstyle{every node}=
[draw,circle,fill=black,minimum size=2pt,inner sep=0pt];
\node (bl) at (0,0) {};
\node (br) at (0,.7) {};
\node (tl) at (.7,0) {};
\node (tr) at (.7,.7) {};
\tikzstyle{every node}=[scale=.7];
\draw[-] (tl) edge[bend right=30] (tr);
\draw[-] (tl) edge[bend left=30] (tr);
\draw[-] (bl) edge[bend right=30] (br);
\draw[-] (bl) edge[bend left=30] (br);
\draw[-] (bl) edge (tl);
\draw[-] (br) edge (tr);
\end{tikzpicture}
}
\newcommand{\biggoggles}{
\begin{tikzpicture}[scale=.8]
\tikzstyle{every node}=
[draw,circle,fill=black,minimum size=2pt,inner sep=0pt];
\node (l) at (0,0) {};
\node (c) at (0.5,0.8) {};
\node (r) at (1,0) {};
\tikzstyle{every node}=[scale=.7];
\draw[-] (l)--  (r);
\draw[-] (l) edge[bend right=25] (c);
\draw[-] (r) edge[bend right=25] (c);
\draw[-] (l) edge[bend left=25] (c);
\draw[-] (r) edge[bend left=25] (c);
\end{tikzpicture}
}
\newcommand{\bigbanana}{
\begin{tikzpicture}[scale=.9]
\tikzstyle{every node}=
[draw,circle,fill=black,minimum size=2pt,inner sep=0pt];
\node (0) at (0,0) {};
\node (1) at (0,.7) {};
\draw[-] (0) edge[bend right=85, min distance=5mm] (1);
\draw[-] (0) edge[bend right=45] (1);
\draw[-] (0) edge[bend left=45] (1);
\draw[-] (0) edge[bend left=85, min distance=5mm] (1);
\end{tikzpicture}
}
\newcommand{\bigkfour}{
\begin{tikzpicture}[scale=.65]
\tikzstyle{every node}=
[draw,circle,fill=black,minimum size=2pt,inner sep=0pt];
\node (c) at (0,0) {};
\node (r) at (0.606,-0.35) {};
\node (l) at (-0.606,-0.35) {};
\node (t) at (0,0.7) {};
\draw[-] (c)--(t)--(r)--(l);
\draw[-] (r)--(c)--(l)--(t);
\end{tikzpicture}
}
\title{A Serre spectral sequence for the moduli space of tropical curves}
\author{Christin Bibby}
\address{Department of Mathematics, Louisiana State University, Baton Rouge, LA 70803}
\email{\url{bibby@math.lsu.edu}}
\author{Melody Chan}
\address{Department of Mathematics, Brown University, Box 1917, Providence, RI 02912}
\email{\url{melody_chan@brown.edu}}
\author{Nir Gadish}
\address{Department of Mathematics, University of Michigan, Ann Arbor, MI}
\email{\url{gadish@umich.edu}}
\author{Claudia He Yun}
\address{Department of Mathematics, University of Michigan, Ann Arbor, MI}
\email{\url{clyun@umich.edu}}
\subjclass[2020]{
14H10, % moduli of algebraic curves
14Q05, % computational aspects of algebraic curves
14T20, % geometric aspects of tropical varieties
55N30, % sheaf cohomology
55R80, % fiber bundles / configuration spaces
55T10% Serre spectral sequence
}
\keywords{Tropical curves,
moduli spaces of curves,
compactified configuration spaces on graphs,
graph complexes, Serre spectral sequence}
\begin{document}

\begin{abstract}
We construct, for all $g\ge 2$ and $n\ge 0$, a spectral sequence of rational $S_n$-representations which computes the $S_n$-equivariant rational cohomology of the tropical moduli spaces of curves $\Delta_{g,n}$ in terms of compactly supported cohomology groups of configuration spaces of $n$ points on graphs of genus $g$.  
Using the canonical $S_n$-equivariant isomorphisms $\widetilde{H}^{i-1}(\Delta_{g,n};\Q) \cong W_0 H^i_c(\cM_{g,n};\Q)$, we calculate the weight $0$, compactly supported rational cohomology of the moduli spaces $\cM_{g,n}$ in the range $g=3$ and $n\le 9$, with partial computations available for $n\le 13$.
\end{abstract}
	
\maketitle

\section{Introduction}

We start with some very brief recollections and motivation.  Let $\cM_{g,n}$ denote the complex moduli stack of  smooth curves of genus $g$ with $n$ distinct marked points. The map $\cM_{g,n}\to \cM_g = \cM_{g,0}$ forgetting marked points is a fibration, with fiber isomorphic to $\Conf_n(S_g)$, the configuration space of $n$ distinct points on a genus $g$ surface.   The associated Serre spectral sequence on compactly supported cohomology is
\begin{equation}\label{eq:algebraic serre ss}
    E_2^{p,q} = \Hc^p(\cM_g, \Hc^q(\Conf_n(S_g)) \Rightarrow \Hc^{p+q}(\cM_{g,n}).
\end{equation}

In this paper, we present a {\em tropical} analogue of this Serre spectral sequence, serving as a major tool for calculation, where the role of the $E_2$-page is played by certain graph cohomology introduced below. Then we use this ``tropical Serre spectral sequence'' to extract new calculations on the weight 0, compactly supported rational cohomology of the algebraic variety $\cM_{g,n}$ with its action by the respective symmetric group.  The new calculations we achieve are for $g=3$ in the range $n\le 9$, and multiplicities of the trivial and sign representations for $n\leq 13$. These calculations are presented in Tables~\ref{table:genus3} and~\ref{table:sign_trivial}. The case $g=2$ was taken up in our previous work \cite{bibby-chan-gadish-yun-homology} and in work of the third author with Hainaut \cite{gadish-hainaut}, when the spectral sequence below is simpler, and in particular degenerates at $E_1$.  Our spectral sequence is a geometric instantiation of an earlier spectral sequence due to Turchin--Willwacher \cite{turchin-willwacher-commutative} for commutative graph complexes with coefficients in Hochschild-Pirashvili cohomology of graphs.  We explain this connection, which requires rectifying some details, in \S\ref{sec:related}.  See \cite{bibby-chan-gadish-yun-homology} and \S\ref{sec:related}~for a further literature summary and earlier references, including to the cases of $g\le 1$.  

To state our first theorem, let $\Delta_{g,n}$ denote the moduli space of unit-volume genus $g$ stable tropical curves with $n$ distinct marked points. 
This space has the following two equivalent descriptions:
\begin{itemize}
\item the quotient of the simplicial completion of $n$-marked Outer Space by the action of $\Gamma_{g,n}$;
\item the quotient of the curve complex $\mathcal{C}_{g,n}$ by the action of the pure mapping class group $\mathrm{PMod}_{g,n}. $
\end{itemize}
Recently, $\Delta_{g,n}$ has been rechristened as a tropical moduli space, emphasizing its role as the boundary complex of the Deligne-Mumford-Knudsen compactification $\cM_{g,n}\subset \overline{\cM}_{g,n}$ (see \cite{acp}). 
If none of these descriptions is familiar, then the reader may roughly picture $\Delta_{g,n}$ as a simplicial complex whose geometric realization parametrizes graphs of genus (first Betti number) $g$ equipped with $n$ distinct marked points on them.

Our tropical analogue of the Serre spectral sequence features the following \textit{graph complex}. Let $\Gamma_g^{(2)}$ be the category of stable $2$-connected graphs of genus $g$: here, stable means that each vertex has valence at least $3$, and $2$-connected means that no vertex separates. Morphisms in $\Gamma_g^{(2)}$ are generated by edge contractions and isomorphisms. Consider $\Out(F_g)$, the outer automorphism group of the free group of rank $g$, i.e., automorphisms of $F_g$ up to conjugation. As in~\cite{turchin-willwacher-hochschild} (see  \S\ref{sec:graph complexes}), every $\Out(F_g)$--representation $V$ defines a graph complex
\begin{equation}
    \GC^p(\Gamma_g^{(2)}; V) \cong \bigoplus_{\substack{[\G] \in \operatorname{Iso}(\Gamma_g^{(2)})\\ |E(\G)| = p+1}} \left( V \otimes \det(E(\G)) \right)^{\Aut(\G)},
\end{equation}
where $E(\G)$ denotes the set of edges in the graph $\G$ and $\det(E(\G))$ is the one-dimensional vector space $\Lambda^{|E(\G)|}(\Q^{E(\G)})$. See \S\ref{sec:graph complexes} for the differential and the action of $\Aut(\G)$ on $V$.

The relevant examples of $\Out(F_g)$-representations for the next theorem are the compactly supported cohomologies $\Hc^q(\Conf_n(R_g);\Q)$, where $R_g$ is the rose graph with one vertex and $g$ loops, and which are nonzero only if $q=n-1$ or $q=n$ (see \cite{bibby-chan-gadish-yun-homology}). The group $\Out(F_g)$ coincides with the group of homotopy equivalences $R_g\to R_g$ up to homotopy, and these act on $\Hc^q(\Conf_n(R_g);\Q)$ diagonally as observed in \cite[\S2]{bibby-chan-gadish-yun-homology}.

\begin{thm}\label{thm:ss}\label{thm:main}
For any $g\geq 2$ and $n\ge 1$,
there is a spectral sequence of rational $S_n$-representations with $E_1$-page 
having every row given by a graph complex
\begin{equation}\label{eq:the-ss} E_1^{p,q} = \GC^p(\Gamma_g^{(2)}; \Hc^q(\Conf_n(R_g);\Q))
\Longrightarrow
\widetilde{\Ho}^{p+q}(\Delta_{g,n};\Q).\end{equation}
This $E_1$-page
is supported  in rows $q=n-1$ and $q=n$.  
The spectral sequence
degenerates at $E_1$ when $g=2$; at $E_2$ when $g=3$; and at $E_3$ when $g>3$.
\end{thm}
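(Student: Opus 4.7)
My plan is to realize the spectral sequence as the cohomological spectral sequence associated to a filtration of $\Delta_{g,n}$ that reflects the combinatorial type of the underlying unmarked tropical curve. The first step is to reduce to $2$-connected graphs: the subcomplex of $\Delta_{g,n}$ consisting of tropical curves whose underlying (unmarked) graph contains a bridge should have vanishing reduced rational cohomology, by an argument modeled on the bridge-killing technique of Chan--Galatius--Payne in the unmarked setting. This reduces the computation to a variant $\overline{\Delta}_{g,n}$ whose cells are indexed by $2$-connected graphs of genus $g$ with $n$ distinct marked points.

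Next, filter $\overline{\Delta}_{g,n}$ by the number of edges of the underlying unmarked $2$-connected graph; the open stratum associated with a $2$-connected graph $\G$ of genus $g$ with $p+1$ edges is the $\Aut(\G)$-quotient of (open edge-length simplex)$\,\times\Conf_n(\G)$. Computing compactly supported cohomology of each stratum via K\"unneth and Poincar\'e duality on the simplex, together with the homotopy equivalence $\G\simeq R_g$ that realizes the $\Out(F_g)$-action on $\Hc^*(\Conf_n(R_g);\Q)$, identifies the $E_1$ page of the filtration spectral sequence with
\[ E_1^{p,q} = \bigoplus_{\substack{[\G]\in\Iso(\Gamma_g^{(2)})\\ |E(\G)|=p+1}} \bigl(\Hc^q(\Conf_n(R_g);\Q)\otimes \det E(\G)\bigr)^{\Aut(\G)}, \]
which is exactly $\GC^p(\Gamma_g^{(2)};\Hc^q(\Conf_n(R_g);\Q))$. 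A careful matching of cellular boundary maps with edge-contraction morphisms in $\Gamma_g^{(2)}$ then identifies $d_1$ with the graph-complex differential; the whole construction is visibly $S_n$-equivariant.

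For the degeneration claims, observe that since $\Hc^q(\Conf_n(R_g);\Q)$ is concentrated in degrees $q=n-1$ and $q=n$ (as established in \cite{bibby-chan-gadish-yun-homology}), the $E_1$ page is supported in at most two rows. The only higher differential that can be nonzero is $d_2\colon E_2^{p,n}\to E_2^{p+2,n-1}$, giving automatic degeneration at $E_3$ for every $g\ge 2$. The improvement to $E_1$-degeneration when $g=2$ should follow from the sharper row-support analysis carried out in \cite{bibby-chan-gadish-yun-homology}. The main obstacle I expect is proving $d_2=0$ when $g=3$: this goes beyond the generic two-row argument, and I anticipate it will require either a weight/parity vanishing extracted from the $\Out(F_3)$-module structure of $\Hc^*(\Conf_n(R_3);\Q)$, or an indirect comparison with the weight filtration on $\Hc^*(\cM_{3,n};\Q)$. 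Secondary obstacles include making the forget-and-stabilize filtration precise at the level of cells (ensuring that marked points sitting on edges do not introduce extraneous incidences), and tracking signs in the identification of $d_1$ via the determinant twist $\det E(\G)$.
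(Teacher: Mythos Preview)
Your overall architecture matches the paper's: reduce to a $2$-connected locus via \cite{cgp-marked}, then filter by edge-count of the underlying unmarked graph to obtain a spectral sequence whose $E_1$-page is the graph complex. Three points, however, diverge from the paper and in two cases constitute genuine gaps.

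\textbf{The reduction step.} The contractible locus you must excise is not merely the bridge locus. The paper invokes \cite[Theorem~1.1]{cgp-marked}, which says that the complement in $\Delta_{g,n}$ of the locus $\Delta_{g,n}^{(2,nr)}$ of $2$-connected graphs with $n$ \emph{pairwise distinct} marked points and no vertex weights is contractible. Bridges, loops, positive vertex weights, and repeated markings are all removed simultaneously; only after this does a forget-the-markings map to $\Delta_g^{(2)}$ exist. Your bridge-only reduction does not produce such a map.

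\textbf{The construction.} Rather than computing strata directly via K\"unneth and Poincar\'e duality, the paper realizes $\Delta_{g,n}^{(2)}$ and its repeated-marking sublocus as $\Out(F_g)$-quotients of universal families $\cU_{g,n}^{(2)}\to \CV_g^{(2)}$ over the $2$-connected part of Outer Space, and applies a general \v{C}ech formula for invariant pushforwards of constructible sheaves on ideal simplicial complexes (Proposition~\ref{prop:formula for compact support cohomology}, Corollary~\ref{cor:geometric cech bicomplex}). Explicit compatible retractions of open stars (Proposition~\ref{prop:retraction on stars}) identify the fibers with $\G^n$ and $\diag_n(\G)$ and the restriction maps with edge-contractions, yielding the double complex of Proposition~\ref{prop:our main double cx}; the spectral sequence is its column filtration. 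Your direct approach is morally equivalent over $\Q$, but the paper's route handles non-free $\Aut(\G)$-actions and the identification of $d_1$ without the sign-tracking you flag as an obstacle.

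\textbf{Degeneration.} Your $g=2$ explanation is incorrect: the reason for $E_1$-degeneration is simply that $\Gamma_2^{(2)}$ contains only the theta graph, so $E_1$ has a single nonzero column and $d_1=0$; no row-support refinement is involved. For $g=3$, the paper's argument is neither a weight/parity vanishing nor a comparison with $\cM_{3,n}$. It is purely combinatorial: the four graphs of $\Gamma_3^{(2)}$ occupy columns $p=3,4,5$, and Proposition~\ref{prop:differential special edge}\eqref{lemma-case:injective} shows that the block of $d_1$ from $p=3$ to $p=4$ (contracting the unique non-parallel edge of the goggles graph to obtain the banana) is injective, since every automorphism of the goggles fixes that edge. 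Hence $E_2$ is supported only in columns $p\in\{4,5\}$, and $d_2$, of bidegree $(+2,-1)$, has zero target.
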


The geometric fact relating our theorem to Serre's spectral sequence is the following, which we discuss in \S\ref{sec:graph complexes}.
An $\Out(F_g)$-representation $V$ defines a local system $\V$ on the topological quotient stack ${\Delta}_{g}^{(2)}$ of $2$-connected tropical curves of first Betti number $g$, and
\begin{equation}
    \Hc^p({\Delta}_{g}^{(2)} ; \V) \cong \Ho^p\big( \GC^*(\Gamma_g^{(2)}; V)\big).
\end{equation}
With this one can think of Theorem \ref{thm:ss} as describing a spectral sequence with
\begin{equation}\label{eq:serre ss}
    E_2^{p,q} \cong \Hc^p(\Delta_g^{(2)};\Hc^q(\Conf_n(\G);\Q))\Longrightarrow
\widetilde{\Ho}^{p+q}(\Delta_{g,n};\Q)
\end{equation}
analogous to the algebraic one for $\cM_{g,n}$ in \eqref{eq:algebraic serre ss}.

The relatively simple form of this spectral sequence
may surprise experts who are familiar with moduli spaces of tropical curves. As discussed in \S\ref{Mtrop}, there is a more natural spectral sequence associated to forgetting marked points, but its form is far more complicated, does not feature configuration spaces, and is generally hard to use in computer calculations.

\subsection{Calculations}
Using Theorem~\ref{thm:main}, we calculated all the weight $0$ cohomology groups with compact support \[\mathrm{Gr}_{0}^W {\Ho}_c^*(\cM_{3,n};\Q)\] for $n\le 9$, as $S_n$-representations.  These calculations are displayed in Table~\ref{table:genus3}.  A key result for achieving these computations is  Proposition \ref{prop:differential special edge}, a statement about the differential of the graph complex that simplifies calculations in two ways.
First, it implies the desired degeneration of the spectral sequence, so that there is no $d_2$ differential when $g=3$.
Second, it allows us to prune the graph complex, meaning that the matrices needed for computation can be made significantly smaller.
Examples \ref{ex:genus3 injective diff} and \ref{ex:genus3 surjective diff} explain how this is used to compute the $S_n$-character of the degree $n+3$ and $n+5$ homology of $\Delta_{3,n}$.
The only other nontrivial homology group is in degree $n+4$, and the equivariant Euler characteristic from \cite[Theorem 1.1]{cfgp-sn} is then used to compute these characters.

Taking one step back to the case of genus $g=2$, the only graph in $\Gamma_2^{(2)}$ up to isomorphism is the theta graph $\Theta$, which has two vertices and three edges between them.
The tropical Serre spectral sequence reduces to a single nontrivial column on the $E_1$ page. This yields the following expression, proven more directly in \cite[Theorem 3.2]{bibby-chan-gadish-yun-homology}:
\[\widetilde{\Ho}^{n+2-*}(\Delta_{2,n};\Q)\cong E_1^{2,n-*}\cong \left[\Hc^{n-*}(\Conf_n(\Theta);\Q)\otimes\det(E(\Theta))\right]^{\Aut\Theta}.\]
Data generated using this formula are included in Tables 1 and 2 of op. cit., which provide the full $S_n$-character for $n\leq 11$ and partial data for $n\leq 17$. Furthermore, \cite{gadish-hainaut} use the formula to exhibit super-exponentially growing families of subrepresentations of $\widetilde{\Ho}^{n+2-*}(\Delta_{2,n};\Q)$ with a suggestively geometric description.

Our calculations in genus $g=2$ and $g=3$ were implemented in SageMath, 
using some further reductions from \cite{bibby-chan-gadish-yun-homology} and Appendix \ref{appendix}.
All our code and data 
are presented at \href{https://github.com/ClaudiaHeYun/BCGY}{this URL}\footnote{\texttt{https://github.com/ClaudiaHeYun/BCGY}}.

\begin{rmk}
When $g\ge 4$, we have no reason to believe that the $d_2$ differential of~\eqref{eq:the-ss} is trivial, as it is for $g=2$ and $3$. This is the most immediate obstruction to further computations.  However, the fact that $H^q_c(\Conf_n(R_g);\Q)$ is nonzero only in two degrees $q=n-1$ and $q=n$ implies immediately that all further differentials vanish in the spectral sequence in Theorem~\ref{thm:ss}, and hence there is convergence at $E_3$ if not sooner.

We also note the growth of the number of isomorphism classes of graphs in $\Gamma_g^{(2)}$.  There is 1 such for $g=2$ and 4 such for $g=3$, but already 17 graphs for $g=4$.  See Figure~\ref{fig:genus3graphs}.  Without our reduction to $2$-connected graphs, there would be 379 such graphs for $g=4$.
\end{rmk}

\begin{figure}[htb]
\begin{tikzpicture}
\node (c) at (2,1) {$\bigcan$};
\node (k) at (2,-1) {$\bigkfour$};
\node (g) at (0,0) {$\biggoggles$};
\node (b) at (-2,0) {$\bigbanana$};
\draw[->] (k) -- (g);
\draw[->] (g) -- (b);
\draw[->] (1.45,1) -- (g);
\end{tikzpicture}
\caption{The four non-isomorphic graphs in $\Gamma_3^{(2)}$.  Arrows indicate the existence of a single edge contraction. }
\label{fig:genus3graphs}
\end{figure}
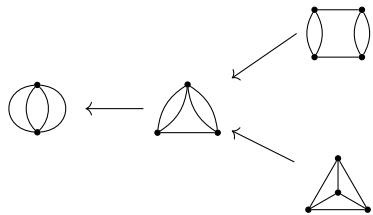

\renewcommand{\arraystretch}{1.2}
\begin{table}[hbt]
{\footnotesize
\begin{tabular}{|r|r|p{.9\textwidth}|}
\hline
$n$ & $i$ & $S_n$-character of $\rH^{n+5-i}(\Delta_{3,n};\Q) \cong \mathrm{Gr}^W_{0} \Hc^{n+6-i}(\cM_{3,n};\Q)$ \\
\hline
\hline
 & 0 & 0 \\
 \cline{2-3}
1 & 1 & $\Specht{(1)}$ \\
 \cline{2-3}
 & 2 & 0 \\
\hline
 & 0 & 0\\
 \cline{2-3}
2  & 1 & 0\\
 \cline{2-3}
  & 2 & 0\\
\hline
 & 0 & $\Specht{(3)}$\\
 \cline{2-3}
3  & 1 & 0\\
 \cline{2-3}
  & 2 & 0\\
\hline
 & 0 & $\Specht{(2,2)}$\\
 \cline{2-3}
4  & 1 & $\Specht{(2,1,1)}$\\
 \cline{2-3}
  & 2 & 0\\
\hline
 & 0 & $2\Specht{(3,1,1)}+\Specht{(2,1,1,1)}$\\
 \cline{2-3}
5  & 1 & $\Specht{(5)}+\Specht{(4,1)}+\Specht{(3,2)}+ \Specht{(3,1,1)}$\\
 \cline{2-3}
  & 2 & 0\\
\hline
 & 0 & $\Specht{(5,1)}+\Specht{(4,2)}+3\Specht{(4,1,1)}+3\Specht{(3,3)}+\Specht{(3,2,1)}+\Specht{(3,1,1,1)}+\Specht{(2,2,1,1)}$ \\
 \cline{2-3}
6  & 1 & $\Specht{(6)}+\Specht{(5,1)}+3\Specht{(4,2)}+\Specht{(3,3)}+2\Specht{(3,2,1)}+\Specht{(2,2,2)}+\Specht{(2,2,1,1)} + 2\Specht{(2,1,1,1,1)}$\\
 \cline{2-3}
  & 2 & 0\\
\hline
 & 0 & $\Specht{(7)}+2\Specht{(6,1)}+5\Specht{(5,2)}+2\Specht{(4,3)}+5\Specht{(4,2,1)}+2\Specht{(3^2,1)}+5\Specht{(3,2^2)}+3\Specht{(3,2,1^2)}+3\Specht{(3,1^4)}+2\Specht{(2^3,1)}+2\Specht{(2^2,1^3)}+2\Specht{(2,1^5)}$\\
 % cocalc ran out of memory during (4,2,1), assume also for conjugate (3,2,1,1)
 \cline{2-3}
7 & 1 & $2\Specht{(5,2)}+2\Specht{(5,1^2)}+\Specht{(4,3)}+5\Specht{(4,2,1)}+5\Specht{(4,1^3)}+2\Specht{(3^2,1)}+3\Specht{(3,2^2)}+4\Specht{(3,2,1^2)}+3\Specht{(3,1^4)}+\Specht{(2^3,1)}$ \\
 \cline{2-3}
  & 2 & 0 \\ 
\hline
 & 0 & $\Specht{(8)}+4\Specht{(6,2)}+2\Specht{(6,1^2)}+2\Specht{(5,3)}+9\Specht{(5,2,1)}+6\Specht{(5,1^3)}+4\Specht{(4^2)}+8\Specht{(4,3,1)}+12\Specht{(4,2^2)}+12\Specht{(4,2,1^2)}+8\Specht{(4,1^4)}+2\Specht{(3^2,2)}+9\Specht{(3^2,1^2)}+8\Specht{(3,2^2,1)}+8\Specht{(3,2,1^3)}+3\Specht{(3,1^5)}+4\Specht{(2^4)}+2\Specht{(2^2,1^4)}$\\
 \cline{2-3}
8 & 1 & $8\Specht{(6,1^2)}+2\Specht{(5,3)}+9\Specht{(5,2,1)}+8\Specht{(5,1^3)}+\Specht{(4^2)}+13\Specht{(4,3,1)}+2\Specht{(4,2^2)}+13\Specht{(4,2,1^2)}+\Specht{(4,1^4)}+5\Specht{(3^2,2)}+11\Specht{(3^2,1^2)}+7\Specht{(3,2^2,1)}+8\Specht{(3,2,1^3)}+\Specht{(2^4)}+6\Specht{(2^3,1^2)}+4\Specht{(2^2,1^4)}+3\Specht{(2,1^7)}+\Specht{(1^8)}$ \\
 \cline{2-3}
  & 2 & $\Specht{(8)}$ \\
\hline
& 0 & 
$\Specht{(1^9)} + 3\Specht{(2, 1^8)} + 5\Specht{(2, 2, 1^5)} + 11\Specht{(2^3, 1^3)} + 5\Specht{(2^4, 1)} + 5\Specht{(3, 1^6)} + 12\Specht{(3, 2, 1^4)} + 26\Specht{(3, 2^2, 1^2)} + 7\Specht{(3, 2^3)} + 17\Specht{(3^2, 1^3)} + 24\Specht{(3^2, 2, 1)} + 12\Specht{(3^3)} + 3\Specht{(4, 1^5)} + 25\Specht{(4, 2, 1^3)} + 25\Specht{(4, 2^2, 1)} + 36\Specht{(4, 3, 1^2)} + 22\Specht{(4, 3, 2)} + 13\Specht{(4^2, 1)} + 6\Specht{(5, 1^4)} + 34\Specht{(5, 2, 1^2)} + 9\Specht{(5, 2^2)} + 29\Specht{(5, 3, 1)} + 2\Specht{(5, 4)} + 15\Specht{(6, 1^3)} + 16\Specht{(6, 2, 1)} + 6\Specht{(6, 3)} + 9\Specht{(7, 1^2)}$
\\
\cline{2-3}
9 & 1 & 
$3\Specht{(2^3, 1^3)} + 3\Specht{(2^4, 1)} + 8\Specht{(3, 1^6)} + 15\Specht{(3, 2, 1^4)} + 26\Specht{(3, 2^2, 1^2)} + 10\Specht{(3, 2^3)} + 9\Specht{(3^2, 1^3)} + 22\Specht{(3^2, 2, 1)} + 9\Specht{(3^3)} + 12\Specht{(4, 1^5)} + 30\Specht{(4, 2, 1^3)} + 35\Specht{(4, 2^2, 1)} + 26\Specht{(4, 3, 1^2)} + 27\Specht{(4, 3, 2)} + 9\Specht{(4^2, 1)} + 7\Specht{(5, 1^4)} + 29\Specht{(5, 2, 1^2)} + 15\Specht{(5, 2^2)} + 29\Specht{(5, 3, 1)} + 7\Specht{(5, 4)} + 4\Specht{(6, 1^3)} + 16\Specht{(6, 2, 1)} + 11\Specht{(6, 3)} + 5\Specht{(7, 1^2)} + 6\Specht{(7, 2)} + 4\Specht{(8, 1)} + \Specht{(9)}$
\\
\cline{2-3}
& 2 & 
$\Specht{(1^9)} + \Specht{(3, 2^3)} + \Specht{(4^2, 1)} + \Specht{(5, 2^2)} + \Specht{(7, 2)}$
\\
\hline
\end{tabular}
}
\bigskip
\caption{Reduced homology of the moduli space of $n$-marked genus 3 tropical curves as a representation of the symmetric group $S_n$. The rows labeled with $i=2$ contribute to $\rH^{4g-4+n}(\cM_{3,n};\Q)$ -- the virtual cohomological dimension.}
\label{table:genus3}
\end{table}

\begin{rmk}\label{rmk-on-BV}
We refer the reader to the recent paper of Borinsky--Vermaseren \cite{borinsky-vermaseren-sn} for a closely related, but different, calculation. They study the {\em moduli spaces of graphs} $\cM\mathcal{G}_{g,n}$ for $g>0$ and $2g-2+n>0$, and specifically the $S_n$-equivariant Euler characteristics thereof.  %They also obtain a stabilization result for the $S_n$-invariant part of these Euler characteristics, as well as in a sign-twisted version.  
The spaces $\cM\mathcal{G}_{g,n}$ are homeomorphic to the open subspace $\Delta_{g,n}^{\mathrm{pure}}$ of $\Delta_{g,n}$ parametrizing tropical curves with all vertex weights zero.  

In contrast, here we are interested in the {\em compactly supported} cohomology of $\Delta_{g,n}^{\mathrm{pure}}$. Indeed, there is an isomorphism \[\widetilde{H}^*(\Delta_{g,n};\Q)\cong H^*_c(\Delta_{g,n}^{\mathrm{pure}};\Q),\] since $\Delta_{g,n}\setminus \Delta_{g,n}^{\mathrm{pure}}$ is contractible \cite[Theorem 1.1(1)]{cgp-marked}.  Both ordinary and compactly supported cohomology of $\Delta_{g,n}^{\mathrm{pure}}$ are thus interesting to study, and neither appears to be directly deducible from the other.  In particular, one does not have Poincar\'e duality for $\Delta_{g,n}^{\mathrm{pure}}$, which is almost never a manifold.
\end{rmk}

\subsection{Forgetting marked points}
\label{Mtrop}

Our approach begins with an unexpectedly useful observation in \cite{cgp-graph-homology, cgp-marked}, that certain large subspaces of $\Delta_{g,n}$ are contractible.  In particular, when $g>0$ and $n\geq 0$, consider the subspace 
$\Delta_{g,n}'$ of $\Delta_{g,n}$ parametrizing isomorphism classes of tropical curves which are obtained from a $2$-connected genus $g$ graph by adding $n$ distinct marked points.  
Then $\Delta_{g,n}'$ admits a forgetful map to $\Delta_{g}'$, and the complement $\Delta_{g,n}\setminus \Delta_{g,n}'$ is a  contractible subcomplex of $\Delta_{g,n}$ \cite[Theorem 1.1(4)]{cgp-marked}. Hence the appearance in~\eqref{eq:the-ss} of the groupoid $\Gamma^{(2)}_g$ that involves only stable $2$-connected graphs of genus $g$.  There is much earlier precedent  in the literature for the contractibility of loci of moduli spaces of graphs with cut vertices: see the work of Conant-Gerlits-Vogtmann \cite{conant-gerlits-vogtmann-cut}.
In fact one could drop the restriction to $2$-connected graphs in the above discussion, and obtain an analogous weaker theorem. However, the feasibility of using the theorem to obtain the results in Tables~\ref{table:genus3} and~\ref{table:sign_trivial} heavily relies on this further reduction to $2$-connected graphs, because it significantly reduces the number of graphs involved.

To interpret the fibers of $\Delta_{g,n}'\to \Delta_g'$ as configuration spaces on graphs, and justify~\eqref{eq:serre ss} rigorously, 
one needs to work with topological stacks rather than spaces. %, .
Tropical moduli stacks have indeed been studied; see \cite{conant-hatcher-kassabov-vogtmann-assembling} and the references therein for a functorial description of the moduli stack of tropical curves. 
Here, we present the stacky $\Delta_{g,n}'$ as a global quotient of the 2-connected locus of the universal family of $n$-marked genus $g$ graphs over Outer Space $\CV_{g}$ of \cite{culler-vogtmann} by the action of $\Out(F_g)$, and we use the formalism of $\Out(F_g)$-invariant pushforward, as in Lazarev-Voronov \cite{LV}.  

Note, by the way, that the existence of a forgetful map $\Delta'_{g,n}\to\Delta'_g$ is in contrast with the fact that there is no forgetful map $\Delta_{g,n}\to\Delta_g$ when $n>1$. 
For example, take a tropical curve with one edge separating two marked points on a genus $0$ vertex from the rest; when one of the two marked points is forgotten, the result is an unstable tropical curve whose stabilization is a graph with no edges, which is not a point in $\Delta_g$. This has an analogue in Harvey's curve complex of a surface $\mathcal{C}_{g,n}$, where a curve bounds a disc with two punctures: forgetting punctures results in a disallowed configuration of curves, and so there is no forgetful map $\mathcal{C}_{g,n}\to \mathcal{C}_g$. These 
curves play an important role in recent work of Brendle--Broaddus--Putman \cite{brendle-broaddus-putman}. 

There {\em is}, however, a forgetful map $\rho\col \mathcal{M}_{g,n}^\mathrm{trop}\to \mathcal{M}_{g}^\mathrm{trop}$, where $\mathcal{M}_{g,n}^{\mathrm{trop}}$ denotes the unnormalized moduli space of genus $g$, $n$-marked tropical curves, i.e., without restricting graphs to have unit volume.  The forgetful map resolves unstable vertices of weight $0$ by repeatedly collapsing all leaf edges and then ``smoothing-out" all vertices of valence $2$. 

Indeed, Hainaut and Petersen suggested an alternative construction of a tropical spectral sequence associated to this forgetful map on unnormalized tropical moduli spaces. It comes by using the Leray spectral sequence in compact support cohomology,
\begin{equation}\label{eq:tropical Leray}
E_2^{p,q}=\Hc^p(\mathcal{M}_{g}^\mathrm{trop}; R^q\rho_!\Q) \xRightarrow{\phantom{long implies}} \Hc^{p+q}(\mathcal{M}_{g,n}^\mathrm{trop};\Q)
\end{equation}
Since $\mathcal{M}_{g,n}^{\mathrm{trop}}$ is simply a generalized cone complex whose link is $\Delta_{g,n}$, these  two spaces have the same compactly supported cohomology up to a degree shift. The stalk of the sheaf $R^q\rho_!\Q$ at a tropical curve $x\in \mathcal{M}_{g}^\mathrm{trop}$ is the $q$-th compact support cohomology of the locus of all $n$-marked curves that stabilize to $x$ after forgetting marked points. This locus is a large and complicated combinatorial space, but Hainaut and Petersen outlined to us an argument comparing the latter stalk with $\Hc^*(\Conf_n(R_g);\Q)$ 
and claim the spectral sequence in \eqref{eq:tropical Leray} is isomorphic to ours.

A third spectral sequence obtained by forgetting marked points, which Hainaut--Petersen conjecture in \cite{hainaut-petersen} is isomorphic to the one above, manifests as the weight zero component of the Leray--Serre spectral sequence for the algebraic moduli spaces $\tilde{\rho}\colon \mathcal{M}_{g,n} \to \mathcal{M}_{g}$. Saito's theory of mixed Hodge modules furnishes the pushforward sheaves $R^q\tilde\rho_! \Q$ with a weight filtration, which in weight zero gives a spectral sequence
\begin{equation}\label{eq:weight zero Leray}
    E_2^{p,q} = W_0\Hc^p(\cM_g,R^q\tilde\rho_!\Q) \quad \xRightarrow{\phantom{long implies}} \quad W_0 \Hc^{p+q}(\cM_{g,n};\Q) \cong \Hc^{p+q}(\cM_{g,n}^{\mathrm{trop}};\Q)
\end{equation}
where the rightmost isomorphism is proved in \cite{cgp-marked}. Since weight zero compact support cohomology of $\cM_g$ (with constant coefficients) agrees with that of $\cM_g^{\mathrm{trop}}$, Hainaut--Petersen conjecture that the latter spectral sequence is naturally isomorphic to the one obtained from $\rho\colon \cM_{g,n}^{\mathrm{trop}}\to \cM_g^{\mathrm{trop}}$.

In this context, an advantage of Theorem~\ref{thm:main} is that passing to the subspace of $\Delta_{g,n}$ denoted by $\Delta'_{g,n}$ above, plus additional 
arguments proven in Appendix~\ref{appendix}, make it actually possible to carry out calculations, as  presented in Table~\ref{table:genus3}.  

\subsection{Relationship with hairy graph complexes and embedding spaces}\label{sec:related}

It is worth noting that Theorem~\ref{thm:main} and Tables~\ref{table:genus3} and~\ref{table:sign_trivial} equivalently compute the homology of a version of a {\em hairy graph complex}, with labelled hairs.  Indeed, our Theorem \ref{thm:main} can be understood, up to some details that we sketch below, as an equivalent reformulation of an earlier spectral sequence of Turchin--Willwacher. We thank Thomas Willwacher for pointing out this connection to us, and to Victor Turchin for additional helpful references.  Below, we attempt to elaborate on the connection, since it has not been fully articulated in the literature, and furnishes additional context and applications for our work. 

For integers $m$ and $N$, let $\mathrm{HGC}_{m,N}$ denote the (commutative) {\em hairy graph complex}, constructed by Arone--Turchin \cite{arone-turchin-graph-complexes}. It depends, up to degree shift, only on the parities of $m$ and $N$.  The complex $\mathrm{HGC}_{m,N}$ is a rational chain complex generated by connected (multi)graphs whose vertices are partitioned into a set of {\em external vertices}, of valence $1$, and {\em internal vertices}, of valence $\ge 3$.  These are oriented by choosing an orientation of the real vector space generated by the set of all external vertices (in degree $-m$), internal vertices (in degree $-N$), and edges (in degree $N-1$), together with an orientation of each edge.   If $N$ is even, the orientations of each edge play no role and can be ignored.  Two graphs, together with total ordering of their {\em orientation set} described above, have their corresponding generators identified up to sign, if there is an isomorphism of one graph to the other. The sign depends on the sign of the permutation taking one total ordering to the other, taking the degrees $-m$, $-N$, and $N-1$ mentioned above into account. The differential on $\mathrm{HGC}_{m,N}$ is a signed sum of $1$-edge contractions.  

Hairy graph complexes were introduced and used to study the rational homotopy of spaces of long embeddings \cite{arone-turchin-graph-complexes}. Let $\mathrm{Emb}_c(\R^m, \R^N)$ denote the space of embeddings $\R^m\to \R^N$ that coincide with a fixed linear embedding outside a compact subset of $\R^N$, let $\mathrm{Imm}_c(\R^m,\R^N)$ be the space of immersions with the same property, and finally let $\overline{\mathrm{Emb}}_c(\R^m, \R^N)$ denote the homotopy fiber of the map $\mathrm{Emb}_c(\R^m, \R^N)\to \mathrm{Imm}_c(\R^m,\R^N)$.  Then \cite{arone-turchin-graph-complexes} proves that the rational homotopy of $\overline{\mathrm{Emb}}_c(\R^m, \R^N)$ is isomorphic to $\Ho_*(\mathrm{HGC}_{m,n})$, for $N\ge 2m+2$.  See \cite{conant-costello-turchin-weed-two} for helpful discussion, as well as a computation of the $2$-loop part of hairy graph homology.  See~\cite{KWZ} for further results on hairy graph cohomology, including the construction of a ``waterfall'' spectral sequence that is a source  of new hairy graph cohomology classes.

In the literature, hairy graph complexes are bigraded by {\em Hodge degree} and {\em complexity}. The Hodge degree of a graph $\G$ is the number $n$ of external vertices, and the complexity is $g+n-1$, where $g$ is the first Betti number of $\G$.  Write $\mathrm{HGC}_{m,N}(s,t)$ for the complexity $s$, Hodge degree $t$ part.
For reference, and to provide context for our work, we record the following comparison isomorphisms. 
\begin{prop}\label{prop:three-comparisons} \mbox{}\begin{enumerate}
    \item For $m$ odd and $N$ even, and for all $g$ and $n$, we have canonical isomorphisms
\[\mathrm{Gr}^W_{2d}\Ho^{2d-i}(\cM_{g,n};\Q)_\mathrm{triv} \cong \widetilde{\Ho}_{i-1}(\Delta_{g,n};\Q)_\mathrm{triv} \cong \Ho_{i-2g}(G^{(g,n)})_\mathrm{triv} \cong \Ho_t(\mathrm{HGC}_{m,N}(g\!+\!n\!-\!1,n))\]
where $d = \dim \cM_{g,n} = 3g\!-\!3\!+\!n$ and $t = -mn + (i+n)(N-1) - (i-g+1)N$.
\item For $m$ and $N$ both even, we have 
canonical isomorphisms
\[\mathrm{Gr}^W_{2d}\Ho^{2d-i}(\cM_{g,n};\Q)_\mathrm{sgn} \cong \widetilde{\Ho}_{i-1}(\Delta_{g,n};\Q)_\sgn \cong \Ho_{i-2g}(G^{(g,n)})_\sgn \cong \Ho_t(\mathrm{HGC}_{m,N}(g\!+\!n\!-\!1,n))\]
for the same values of $d$ and $t$.
\end{enumerate}
\end{prop}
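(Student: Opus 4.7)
The plan is to factor each isomorphism into three $S_n$-equivariant identifications, each restricting to the trivial or sign isotypic subspace. The first identification uses Poincar\'e duality, the second uses the Chan--Galatius--Payne graph-complex presentation of $\widetilde{\Ho}_*(\Delta_{g,n})$, and the third translates the CGP complex into the hairy graph complex of \cite{arone-turchin-graph-complexes}.

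For the first isomorphism, I would combine Poincar\'e duality for the smooth Deligne--Mumford stack $\cM_{g,n}$ of complex dimension $d = 3g-3+n$, which gives the $S_n$-equivariant isomorphism of mixed Hodge structures $\mathrm{Gr}^W_{2d}\Ho^{2d-i}(\cM_{g,n};\Q) \cong \bigl(\mathrm{Gr}^W_0 \Hc^i(\cM_{g,n};\Q)\bigr)^\vee$, with the canonical identification $\mathrm{Gr}^W_0 \Hc^i(\cM_{g,n};\Q) \cong \widetilde{\Ho}^{i-1}(\Delta_{g,n};\Q)$ recalled in the excerpt. Dualizing reduced cohomology yields reduced homology, and commutes with passage to isotypic components. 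For the second isomorphism, I would invoke the CGP graph-complex presentation: $\widetilde{\Ho}_*(\Delta_{g,n};\Q)$ is computed by a chain complex $G^{(g,n)}$ generated by isomorphism classes of $n$-marked stable genus-$g$ graphs (with an orientation on the edge set) with edge-contraction differentials, yielding $\widetilde{\Ho}_{i-1}(\Delta_{g,n}) \cong \Ho_{i-2g}(G^{(g,n)})$ after the standard degree shift.

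For the third and substantive isomorphism, I would construct a chain isomorphism between the trivial or sign isotypic piece of $G^{(g,n)}$ and the bigraded piece $\mathrm{HGC}_{m,N}(g+n-1, n)$ of the hairy graph complex via the standard dictionary: each labeled marked point of a CGP graph becomes a univalent ``hair'' attached at the corresponding vertex. Under this dictionary the stability condition on valences matches, and a generator with $i$ edges in $G^{(g,n)}$ becomes a hairy graph with $n$ external vertices, $i + n$ total edges, and---by Euler characteristic---$v = i - g + 1$ internal vertices, giving Hodge degree $n$ and complexity $g+n-1$ as asserted. A direct calculation of the total degree yields $-mn - Nv + (N-1)(i+n) = t$.

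The main obstacle will be reconciling Koszul sign conventions on either side. In $\mathrm{HGC}_{m,N}$, the orientation datum is a generator of the determinant line of the graded vector space spanned by external vertices (degree $-m$), internal vertices (degree $-N$), and edges (degree $N-1$); swapping two external vertices simultaneously swaps their hair edges, contributing a total Koszul sign $(-1)^{m + N - 1}$. For $N$ even this equals $+1$ when $m$ is odd and $-1$ when $m$ is even, so under the labeling/unlabeling dictionary the $S_n$-trivial part of $G^{(g,n)}$ matches $\mathrm{HGC}_{m,N}$ in the first case and the $S_n$-sign part matches in the second. After pinning these signs down carefully, the edge-contraction differentials match on the nose via the hair dictionary, completing the identification at the level of chain complexes and hence of homology.
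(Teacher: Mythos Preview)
Your proposal is correct and follows essentially the same route as the paper's argument: the first two isomorphisms are attributed to mixed Hodge theory and the Chan--Galatius--Payne graph-complex presentation (the paper simply cites \cite{acp,cgp-marked}), and the third is obtained by the identical ``marking $\leadsto$ hair'' dictionary, with the same edge/vertex count $n$ external vertices, $i+n$ edges, $v=i-g+1$ internal vertices, and the same parity analysis of the degree $-m+(N-1)$ of a hair to distinguish the trivial and sign cases when $N$ is even. Your Koszul-sign computation $(-1)^{m+N-1}$ for swapping two hairs is exactly the paper's observation that each hair carries total degree $-m+(N-1)$, so nothing further is needed.
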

\noindent Here, $G^{(g,n)}$ is the {\em $n$-marked Kontsevich commutative graph complex}, with conventions as in \cite[\S2.4]{cgp-marked}; it also coincides with the complex denoted $M_g(P^n_2)$ in \cite{tsopmene-turchin-euler}. For $\lambda \in \{\mathrm{sgn}, \mathrm{triv}\}$, the subscript $\lambda$ denotes the $\lambda$-isotypical component of this $S_n$-representation.  

In each item of Proposition~\ref{prop:three-comparisons}, the first isomorphism holds without restriction to $\lambda = \mathrm{sgn},\mathrm{triv}$ and is known from \cite{acp, cgp-marked}, via Deligne's mixed Hodge theory. The second also holds without restriction, and is in \cite{cgp-marked}. The third is easily deduced as follows; the hardest part is keeping track of degrees. By turning markings on vertices of a marked graph into new external vertices in a hairy graph, and forgetting the labels, we convert an $n$-marked graph of genus $g$ having $i$ edges to a hairy graph with $n$ external vertices, $n+i$ edges, and $v = i-g+1$ internal vertices.  The condition that $N$ is even corresponds to the orientation conventions for $G^{(g,n)}$.  If in addition $m$ is even, then each of the $n$ markings  yields odd total degree $-m+(N-1)$ in the hairy graph complex, corresponding to the degree of the univalent vertex plus the degree of the incident edge that together replace the marking.  If $m$ is odd, then $-m+(N-1)$ is even.  

In this way, our results, Table~\ref{table:sign_trivial} in particular, give new data about embedding spaces into $\R^N$ when $N$ is even. And, what we are studying in Theorem~\ref{thm:main} and Table~\ref{table:genus3} can be regarded as a {\em labelled} version of the hairy graph complex. Such a variant was discussed in~\cite[Remark 6.2]{turchin-willwacher-commutative} but not carried out there any further.  Of course, the bridge from labelled hairy graph complexes to tropical moduli spaces and top-weight cohomology of $\cM_{g,n}$, central to our perspective, was also not in place in the literature at the time that~\cite{turchin-willwacher-commutative} was written.  This bridge was provided by Chan--Galatius--Payne's contractibility of the ``positive vertex weight'' subspace of $\Delta_{g,n}$, denoted $\Delta_{g,n}^{\mathrm{w}}$ \cite[Theorem 1.1(1)]{cgp-marked}.

Finally, the explicit connection of our Theorem~\ref{thm:main} to \cite{turchin-willwacher-commutative} is as follows.  
They describe a spectral sequence converging to hairy graph homology, whose $E_1$-page is a (nonhairy) graph complex with coefficients in the local system of Hochschild--Pirashvili cohomology of graphs.
In turn, rational Hochschild--Pirashvili cohomology of a finite simplicial complex $X$ is known to be essentially equivalent to the compactly supported cohomology $\Ho^*_c(\Conf_n(X);\Q)$ for all $n\geq 1$ (see, e.g. \cite[Theorem 1.12]{gadish-hainaut}). Putting the two equivalences together, our Theorem \ref{thm:main} can a posteriori be understood as an adaptation of a labelled version of Turchin--Willwacher's spectral sequence to the tropical moduli space $\Delta_{g,n}$.

\subsection{What's new}\label{sec:whats-new} Having compared, we now contrast with \cite{turchin-willwacher-commutative}, and highlight additional new aspects of our work.  
First, in this paper, we leverage the understanding of $\Ho^*_c(\Conf_n(R_g);\Q)$ from \cite{bibby-chan-gadish-yun-homology, gadish-hainaut} to obtain new calculations, for low genus $g\le 3$, but for higher $n$ than previously attainable.  Also, the reduction to $2$-connected graphs is also new and crucial for obtaining these calculations (Section~\ref{sec:outer-space-and-friends}).  Additional important computational aids are discussed in Appendix~\ref{appendix}.

In addition, we offer some new theorems on the level of geometry.  We first establish in Section~\ref{sec:cech} a general formula for sheaf cohomology of an ideal simplicial complex equivariant with respect to the action of a discrete group (Proposition~\ref{prop:formula for compact support cohomology}). Then Corollary~\ref{cor:geometric cech bicomplex} provides a geometric application, to instances when such a sheaf arises from a space-level map to an ideal simplicial complex that is ``simplicially locally trivial.''  
We expect that the above results will apply readily to other moduli spaces in tropical geometry.  

In fact, roughly speaking, when there are results available about contractibility, or at least homological acyclicity, of ``repeated marking subcomplexes'' (e.g., \cite{cgp-marked}), it is reasonable to hope for a  spectral sequence similar to~\eqref{eq:the-ss} that also uses local systems coming from cohomology with compact supports of configuration spaces. For example, we expect applications to moduli spaces of Hassett-weighted tropical curves $\Delta_{g,w}$ \cite{cavalieri-hampe-markwig-ranganathan, kannan-li-serpente-yun-topology, ulirsch-tropical}, which should be related in this way to $w$-weighted configuration spaces of graphs.  Second, we also expect applications of the results in Section~\ref{sec:cech} to moduli spaces of admissible $G$-covers of curves, related to configuration spaces of points on $G$-covers of graphs, in light of the recent work \cite{brandt-chan-kannan-on}.  That paper constructs boundary complexes of moduli spaces of admissible $G$-covers of genus $0$ curves, when $G$ is abelian, and, important to this discussion, proves that an appropriate repeated marking locus is acyclic in homology.

\subsection*{Acknowledgements}
We are grateful for support we received via the program Collaborate@ICERM, which provided ideal working conditions at the Institute for Computational and Experimental Research in Mathematics (ICERM) during the summer of 2022.

Furthermore, this work benefited from many useful conversations with D.~Petersen and L.~Hainaut, whose suggestions inspired the proof of our main theorem.

V.~Turchin and T.~Willwacher provided important feedback on an earlier draft, including crucial references; we thank them very much.

C.B. was supported by NSF DMS-2204299; M.C. was supported by NSF CAREER DMS-1844768, a Sloan Foundation Fellowship and a Simons Foundation Fellowship; N.G. was supported by NSF DMS-1902762 and an AMS--Simons Travel Grant.

\section{Graph complexes}\label{sec:graph complexes}

Decorated graph complexes were introduced in \cite[\S4]{turchin-willwacher-commutative}, constructing a certain chain complex from a representation of the outer automorphism group $\Out(F_g)$. Below, we need a slight variant of their construction, so we take this section to recall the notion and set up the conventions we use.

As we shall explain, our graph complexes arise from local systems on the moduli {\em stack} of metric graphs of genus $g$. This stack is realized as a stack quotient $[\CV_g/\Out(F_g)]$, where $\CV_g$ denotes Culler-Vogtmann Outer Space of rank $g$. A close comparison in the literature is
Lazarev--Voronov's \cite{LV} 
{\em constructible sheaves} on the moduli \emph{space} of metric graphs $\CV_g/\Out(F_g)$, rather than the stack -- see Remark \ref{rmk:LV definitions} below.

\subsection{Graph complexes associated to graph local systems.}

We work over $\Q$, or equally well, over any field of characteristic $0$.  We will freely regard ungraded vector spaces as graded vector spaces concentrated in degree $0$.  Given a vector space $W$ of dimension $n$, write $\det W = \Lambda^n(W)[-n]$ for the determinant: it is $1$-dimensional, concentrated in degree $n$.  If $E$ is a finite set, for example the set of edges of a graph, set $\det E = \det \Q^E$. Then $\det$ is a contravariant functor from finite sets to vector spaces.  

\begin{defn}Let $\GrphCat_g$ be the category of connected (multi-)graphs of genus $g$, with morphisms generated by isomorphisms and non-loop edge contractions $G\to G/e$. 
\end{defn}

\begin{defn} Let $\A$ be any (small) full subcategory of $\GrphCat_g$ that is closed under isomorphisms; call its objects {\em $\A$-graphs} (e.g., 2-connected graphs, or loopless graphs). Say that $\cA$ has the {\em diamond property} if 
for all graphs $\G$ and $e,f\in E(\G)$, if $\G, \G/e,$ and $\G/\{e,f\}$ are in $\A$, then $\G/f\in \A$.
\end{defn}
\noindent Call a full subcategory $\cA$ of $\GrphCat_g$ {\em interval-closed} if whenever $\G, \G'\in \cA$ and $\G\to \G''\to \G'$ is a diagram in $\GrphCat_g$, then $\G''\in \A$. 
If $\cA$ is interval-closed, then $\cA$ has the diamond property. 

\begin{defn}\label{def:local-system}
A functor \[\V\col \GrphCat_g^\op \to \Vect^{\cong}_\Q\] into vector spaces and linear isomorphisms is called a \emph{graph local system}.  

For $\cA$ a full subcategory of $\GrphCat_g$, a functor \[\V\col \A^\op \to \Vect^{\cong}_\Q\] is called an \emph{$\A$-graph local system}.  
For an $\cA$-graph $G$, we shall write $V_\G$ rather than $\V(\G)$.
\end{defn}

Let $\cA$ have the diamond property.  We define an exact functor $\GC(\A;-)$ that takes an $\A$-graph local system $\V$ to a chain complex $\GC(\A;\V)$, called the  $\cA$-{\em graph complex} of $\V$.  Thus a short exact sequence of graph local systems yields a long exact sequence of cohomology of graph complexes.
Let $\A^{\cong}$ denote the groupoid which has the same objects as $\A$ and whose morphisms are the isomorphisms in $\A$.
\begin{defn}\label{def:local-system-complex}
Suppose $\A$ has the diamond property and let $\V\col \A^\op \to \Vect_{\Q}^{\cong}$ be an $\A$-graph local system.  The \emph{$\mathcal{A}$-graph complex} of $\V$ is the chain complex
\begin{equation}\label{eq:gc-a-v}
    \GC(\A; \V) = \varprojlim_{\G\in \A^{\cong}} \left(V_\G \otimes \det(E(\G))[1]\right)
\end{equation}
with limit taken over the linear maps
\begin{equation}\label{eq:intertwiner}
    \V_\phi \otimes \det \phi\col V_{\G_2} \otimes \det E(\G_2) \xrightarrow{\ \cong\ } V_{\G_1} \otimes \det E(\G_1)
\end{equation}
induced by graph isomorphisms $\phi\col G_1\to G_2$.
The differential, of degree $+1$, is given by edge uncontraction, described precisely in the next definition.
\end{defn}

\begin{rmk}\label{rmk:direct-sum-way}
By picking a representative from every isomorphism class of $\A$-graphs, 
the complex can be explicitly presented in each degree $p$ as 
\begin{equation}\label{eq:graph-complex}
    \GC^p(\A; \V) \cong \bigoplus_{\substack{[\G] \in \operatorname{Iso}(\A)\\ |E(\G)| = p+1}} \left( V_\G \otimes \det(E(\G)) \right)^{\Aut(\G)}.
\end{equation}
Note the direct sum replacing a direct product since there are finitely many connected graphs with $p$ edges, up to isomorphism.
\end{rmk}

We now describe the differential in an $\A$-graph complex. Given an $\A$-graph $\G$, say that $e\in E(\G)$ is an \emph{$\A$-edge} if $\G/e$ is an $\A$-graph.  Write $E_\A(\G)$ for the set of $\A$-edges.
For every $\A$-graph $\G$ and $e\in E_{\A}(\G)$, let
\begin{equation}\label{eq:edge component of graph diff}
    d_e: V_{\G/e} \otimes \det(E(\G/e)) \longrightarrow V_\G \otimes \det(E(\G))
\end{equation}
denote the tensor product of the linear map $\V_{\G\to \G/e}$ with $e\wedge(-):\det(E(\G/e)) \xrightarrow{\cong} \det(E(\G))$. Then $d_e$ is a linear isomorphism.

For any $\A$-graph $\G'$, write $\pi_{\G'}: \GC(\cA;\V) \to V_{\G'} \otimes \det(E(\G'))$  for the canonical map out of the inverse limit.
Summing over $e\in E_{\A}(\G)$ yields a map
\begin{equation}\label{eq:sum-differential}
\sum_{e\in E_\A(\G)} d_e\circ \pi_{\G/e} \colon \GC(\A;\V) \to V_{\G} \otimes \det (E(\G))\end{equation}
for each $\A$-graph $\G$. A quick verification shows that the maps~\eqref{eq:sum-differential} commute with the isomorphisms $V_G\otimes \det E(\G) \to V_{G_2}\otimes \det E(\G_2)$ induced by graph isomorphisms $\G\to \G_2$.  

\begin{defn}
The differential on $\GC(\cA;\V)$  in~\eqref{eq:gc-a-v} is defined as the map 
\begin{equation}\label{eq:d}
d\col \GC(\A;\V)\to \GC(\A;\V)
\end{equation}
that is induced by from \eqref{eq:sum-differential} by the universal property of limits.
\end{defn}

\begin{lem}\label{lem:d2}
With $d$ as in~\eqref{eq:d}, we have $d^2  = 0.$
\end{lem}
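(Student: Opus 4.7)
The plan is to verify $d^2 = 0$ componentwise at each $\A$-graph $\G$, after which it descends to the inverse limit by the universal property. By construction of the differential, for any $\alpha \in \GC(\A;\V)$,
\[
\pi_\G(d^2\alpha) \;=\; \sum_{e\in E_\A(\G)} d_e\bigl(\pi_{\G/e}(d\alpha)\bigr) \;=\; \sum_{e\in E_\A(\G)} \sum_{f\in E_\A(\G/e)} (d_e\circ d_f)\bigl(\pi_{\G/\{e,f\}}(\alpha)\bigr),
\]
where $\G/\{e,f\}$ denotes the iterated contraction $(\G/e)/f$. So the main task reduces to showing that the indexing set of ordered pairs $(e,f)$ is stable under the involution $(e,f)\leftrightarrow (f,e)$, and that the two contributions of each orbit cancel.

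For stability, I would invoke the diamond property directly. If $e\in E_\A(\G)$ and $f\in E_\A(\G/e)$, then $\G,\G/e,\G/\{e,f\}$ all lie in $\A$; the diamond property then forces $\G/f\in\A$, i.e. $f\in E_\A(\G)$, and since $(\G/f)/e = \G/\{e,f\}$ already lies in $\A$, we also have $e\in E_\A(\G/f)$. Hence the ordered pair $(f,e)$ also appears in the double sum.

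For the sign cancellation, I would unpack the definition of $d_e$: on a pure tensor $w\otimes \eta \in V_{\G/\{e,f\}}\otimes\det E(\G/\{e,f\})$,
\[
(d_e\circ d_f)(w\otimes \eta) \;=\; \V(\G\to \G/e)\,\V(\G/e\to\G/\{e,f\})(w)\;\otimes\;(e\wedge f\wedge \eta),
\]
and the analogous formula for $d_f\circ d_e$ uses $\V(\G\to \G/f)\,\V(\G/f\to \G/\{e,f\})$ together with $f\wedge e\wedge \eta$. Functoriality of $\V$ (applied to the commuting square of contractions in $\GrphCat_g$) identifies both iterated $\V$-maps with $\V(\G\to \G/\{e,f\})$, and antisymmetry of the wedge gives $f\wedge e = -e\wedge f$, so $d_e\circ d_f + d_f\circ d_e = 0$ on this summand.

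Summing over the pairs $\{e,f\}$, every orbit contributes zero, so $\pi_\G(d^2\alpha) = 0$ for every $\G\in\A$, and therefore $d^2=0$. The only real subtlety is making sure the diamond property is what one needs — i.e., that the pairing $(e,f)\leftrightarrow (f,e)$ is closed on the indexing set; the wedge-sign calculation and the functoriality step are routine once the indexing is handled.
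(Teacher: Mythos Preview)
Your proof is correct and is precisely the argument the paper has in mind: the paper's own proof is a one-sentence sketch (``one may contract any pair of $\A$-edges in either order, with the two possible orders contributing opposite sign''), and you have faithfully expanded that sketch, with the diamond property ensuring the pairing $(e,f)\leftrightarrow(f,e)$ is well-defined on the index set and the wedge/functoriality computation giving the cancellation.
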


\begin{proof}
The usual proof that a chain complex differential squares to 0 applies when $\cA$ has the diamond property:
one may contract any pair of $\cA$-edges in either order, with the two possible orders contributing opposite sign. 
\end{proof}

Later, in Remark~\ref{rem:a-paper-differential} we provide a formula for the differential in terms of the choices of representatives in Remark~\ref{rmk:direct-sum-way}.

\begin{rmk}\label{rmk:LV definitions}
We compare the setup in this section with the related work of Lazarev--Voronov \cite{LV}, who construct a graph complex associated to a cyclic operad. Their construction starts with a functor from the category of graphs to vector spaces: 
a \emph{coefficient system} associated to a constructible sheaf on the moduli space of metric graphs. 
In particular, they allow non-invertible morphisms between vector spaces associated with different graphs, which gives flexibility: e.g., one obtains all graph complexes arising from cyclic operads \cite{conant-vogtmann-on}.

The local systems considered here only involve invertible linear maps rather than arbitrary linear maps, but we work with the {\em stack} of metric graphs, 
allowing the group $\Aut(\G)$ to act nontrivially on the vector space $V_\G$ associated to a graph $\G$. Hence invariant subspaces appear in \eqref{eq:graph-complex}.  
A graph local system yields a coefficient system in the sense of \cite{LV} in the special case that every automorphism $\phi$ of a graph $\G$ induces the identity map on $V_\G$. 
\end{rmk}

\subsection{Equivalence of graph local systems and $\Out(F_g)$-representations}
Now consider the case that $g>0$ and $\Gamma_g$ is the subcategory of graphs with each vertex of valence at least 3.
Right representations of the group $\Out(F_g)$ furnish immediate examples of $\Gamma_g$-graph local systems, as noted by Turchin--Willwacher:

\begin{prop}[{\cite[Example 4]{turchin-willwacher-commutative}}] \label{prop:equiv-cat} The category of $\Gamma_g$-graph local systems is equivalent to the category of right $\Out(F_g)$-representations.
\end{prop}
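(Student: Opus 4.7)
The plan is to exhibit a pair of mutually quasi-inverse functors between the category of $\Gamma_g$-graph local systems and the category of right $\Out(F_g)$-representations. The key geometric input is the identification of $\Out(F_g)$ with homotopy classes of self-homotopy equivalences of the rose $R_g$, together with the contractibility of Culler--Vogtmann Outer Space $\CV_g$.

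In one direction, given a $\Gamma_g$-graph local system $\V$, set $F(\V) = V_{R_g}$. To define the right $\Out(F_g)$-action: any $\phi \in \Out(F_g)$ can be realized by a zigzag in $\Gamma_g$,
\[
R_g \leftarrow \G_1 \to \G_2 \leftarrow \cdots \to R_g,
\]
whose arrows are isomorphisms and non-loop edge contractions. Such zigzags exist because every object of $\Gamma_g$ reduces to $R_g$ by iteratively contracting non-loop edges --- stability is preserved, since contracting a non-loop edge between two vertices of valence $\geq 3$ produces a vertex of valence $\geq 4$. Applying $\V$ and composing produces an automorphism of $V_{R_g}$, and a natural transformation of local systems restricts on $R_g$ to an $\Out(F_g)$-equivariant linear map.

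In the reverse direction, given a right $\Out(F_g)$-representation $W$, define
\[
G(W)(\G) = W \otimes_{\Q[\Out(F_g)]} \Q[\mathrm{Mark}(\G)],
\]
where $\mathrm{Mark}(\G)$ is the set of homotopy classes of homotopy equivalences $R_g \to \G$, a free transitive left $\Out(F_g)$-set under pre-composition (with the identity marking identifying $\mathrm{Mark}(R_g) \cong \Out(F_g)$). For a morphism $f \colon \G \to \G'$ in $\Gamma_g$, post-composition by $f$ is an $\Out(F_g)$-equivariant bijection $\mathrm{Mark}(\G) \to \mathrm{Mark}(\G')$ because $f$ is a homotopy equivalence; the contravariant functor $G(W)$ assigns to $f$ the inverse of the induced linear isomorphism.

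For the verification: $F(G(W)) = W \otimes_{\Q[\Out(F_g)]} \Q[\Out(F_g)] \cong W$ with compatible actions. Conversely, any marking $m \in \mathrm{Mark}(\G)$ admits a realization by a zigzag in $\Gamma_g$, and applying $\V$ to such a zigzag gives an isomorphism $V_{R_g} \isoto V_\G$; the rule $v \otimes m \mapsto \V(\text{zigzag})(v)$ assembles into a natural isomorphism $G(F(\V))(\G) \isoto V_\G$. The main obstacle is the well-definedness of the $\Out(F_g)$-action built in $F$: distinct zigzags realizing the same $\phi \in \Out(F_g)$ must produce the same automorphism of $V_{R_g}$. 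Equivalently, the localization of $\Gamma_g$ at all its morphisms, based at $R_g$, is the one-object groupoid on $\Out(F_g)$; we expect to deduce this from the contractibility of the spine of Outer Space, which provides a combinatorial model for $B\Out(F_g)$ in which loops based at $R_g$ in the nerve of $\Gamma_g$ compute $\Out(F_g)$.
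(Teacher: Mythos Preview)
The paper does not supply its own proof; the proposition is stated with a citation to Turchin--Willwacher. That said, the definition immediately following the proposition spells out one direction of the equivalence exactly as you do: for a right $\Out(F_g)$-representation $V$, choose a marking $m_\G\colon R_g\to \G$ for every $\G$ and transport the action through these markings. This is your functor $G$ after picking a basepoint in each torsor $\mathrm{Mark}(\G)$, so your construction is aligned with how the paper actually uses the equivalence.

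Your diagnosis of the main obstacle---that two zigzags realizing the same $\phi\in\Out(F_g)$ must induce the same automorphism of $V_{R_g}$---is correct, and the resolution you point to can be made precise as follows. There is a well-defined homomorphism $\mathrm{Aut}_{\Gamma_g[\Gamma_g^{-1}]}(R_g)\to \Out(F_g)$, since every morphism of $\Gamma_g$ is a homotopy equivalence on realizations. Let $I_g$ be the category of marked stable genus-$g$ graphs $(\G,h)$, defined as in the paper's $I_g^{(2)}$ but without the $2$-connectedness hypothesis. A zigzag in $\Gamma_g$ from $R_g$ to $R_g$ lifts uniquely, once one fixes the initial marking $\mathrm{id}$, to a zigzag in $I_g$ from $(R_g,\mathrm{id})$ to $(R_g,\phi)$. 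Connectedness of the spine $|NI_g|\simeq K_g$ gives surjectivity of the homomorphism; simple connectedness (indeed contractibility, by Culler--Vogtmann) gives injectivity, since a zigzag realizing the identity lifts to a loop in $|NI_g|$, which is null-homotopic, hence trivial in the groupoid completion of $I_g$ and therefore in that of $\Gamma_g$. This closes the gap you flagged.

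One small correction: pre-composition $h\mapsto h\circ\alpha$ makes $\mathrm{Mark}(\G)$ a \emph{right} $\Out(F_g)$-torsor, not a left one; either replace it by $h\mapsto h\circ\alpha^{-1}$ or adjust the side of the tensor product accordingly.
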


Suppose $\A$ is a full subcategory of $\Gamma_g$ and has the diamond property.  Let ${V}$ be a right $\Out(F_g)$-representation.  In view of Proposition~\ref{prop:equiv-cat}, ${V}$  gives rise to a graph complex $\GC(\A;{V})$, spelled out below.
\begin{defn}
  The \emph{$\mathcal{A}$-graph complex with coefficients in} ${V}$ is the chain complex $\GC(\A;\V)$ of Definition~\ref{def:local-system-complex} associated to the graph local system $\V$ of ${V}$ as in Proposition~\ref{prop:equiv-cat}. Precisely,
\begin{equation}
    \GC(\A; {V}) = \varprojlim_{\G\in \A^{\cong}} \left({V} \otimes \det(E(\G))[1]\right),
\end{equation}
with inverse limit defined as follows.  For each $\A$-graph $\G$, pick a marking $m_G\col R_g\xrightarrow{\sim}  \G$ and a homotopy inverse $m_G^{-1}$. Then for every isomorphism $\phi\col G_1\to G_2$, the composition $m_{\phi} = m_{G_2}^{-1}\circ \phi \circ m_{G_1}:R_g\to R_g$ represents an element in $ \operatorname{hEq}(R_g,R_g)\cong \Out(F_g)$, depending functorially on $\phi$. Tensoring with $\det_{\phi}$ furnishes an isomorphism
\[(m_{\phi})^* \otimes {\det}_{\phi} : {V} \otimes \det(E(\G_2)) \to {V} \otimes \det(E(\G_1))\]
and the inverse limit is taken over these maps.
\end{defn}

\begin{rmk}\label{rmk:direct-sum-way-out}
As in Remark~\ref{rmk:direct-sum-way}, picking a representative from every isomorphism class of $\A$-graphs, the complex can be explicitly presented in each degree $p\ge -1$ as 
\begin{equation}
    \GC^p(\A; {V}) \cong \bigoplus_{\substack{[\G] \in \operatorname{Iso}(\A) \\ |E(\G)|=p+1}} \left( {V} \otimes \det(E(\G)) \right)^{\Aut(\G)}
\end{equation}
where $\Aut(\G)$ acts as in the previous definition: through a marking $m_G: R_g \to G$.
\end{rmk}

\begin{rmk}[Cohomology of moduli stack of graphs]
As explained in \cite[\S4.1]{turchin-willwacher-commutative}, a $\Gamma_g$-graph local system $\V$ is equivalent to a local system on the moduli stack of unit-volume connected metric graphs of genus $g$. This stack may be presented as the global quotient stack of the Culler--Vogtman Outer Space $\left[\CV_g/\Out(F_g)\right]$ (recalled in \S\ref{sec:outer-space-and-friends}), and a subcategory of $\A$-graphs defines a substack. 

    The graph complex given above computes the cohomology of the stack with compact support and coefficients in the local system $\V$. Generally, the cohomology of this stack is given by the homotopy limit $\operatorname{holim}_{G\in \Gamma_g} V_\G$, and the graph complex is a cellular chain complex computing it.
    Note that one may take ordinary $\Aut(\G)$-invariants in \eqref{eq:graph-complex} instead of derived invariants since $V_\G$ consists of rational vector spaces and $\Aut(\G)$ is a finite group, so every representation is injective.
\end{rmk}

More generally, a graph local system $\V$ can be taken to be a complex of graph local systems, sending every morphism in $\A$ to a quasi-isomorphism of complexes. In this case the $\A$-graph complex is naturally a double complex, whose total complex computes the graph cohomology with coefficients in $\V$. This is the level of generality that we use in \S\ref{sec:proof} for the proof of Theorem \ref{thm:ss}.

\section{Equivariant constructible sheaves on ideal simplicial complexes}  
\label{sec:cech}

Towards a proof of Theorem~\ref{thm:main}, we establish a formula for sheaf cohomology of an ideal simplicial complex equivariant with respect to the action of a discrete group.  Throughout, our sheaves are sheaves of $\Q$-vector spaces or complexes of sheaves of $\Q$-vector spaces.

An \emph{ideal simplicial complex} is the complement $U$, in a simplicial complex $X$, of a closed subcomplex $Z$.  It can thus be presented as topological space as a colimit of a diagram of ideal simplices and face morphisms.  Here, an \emph{ideal simplex} is a simplex possibly with some of its proper closed faces removed. 
Note that if $U$ is an ideal simplicial complex then it is the union $U$ of the open stars of simplices of $X$ not in $Z$.  Conversely, every union $U$ of open stars of simplices in $X$ is an ideal simplicial complex.

Now let $\Gp$ be a discrete group.  
Let $U$ be a space with a $\Gp$-action, and let $f\colon U \to U/\Gp$ be the quotient map.  Let $\F$ be a $\Gp$-equivariant sheaf on $U$.  

\begin{defn}
The {\em invariant pushforward} $f_*^\Gp\F$ of $\F$ is the sheaf on $U/\Gp$ given by
\[
f_*^\Gp\F(V) := \F(f^{-1}(V))^\Gp = (f_*\cF)(V)^\Gp, 
\]
for each open  $V\subseteq U/\Gp$.
\end{defn}
\noindent Note that if the action of $\Gp$ on $U$ is proper, the stalks $(f_*^\Gp\F)_{[x]}$ of the invariant pushforward are described up to natural isomorphism as \[(f_*^\Gp\F)_{[x]} \cong (\F_x)^{\stab_\Gp(x)},\]
for $x\in U$.

Let $X$ be a space with a $\Gp$-action and $U\subseteq X$ a $\Gp$-invariant open subset on which the action of $\Gp$ is proper. Consider the commutative square
\[
\xymatrix{ U \ar[r]^{\bar{f}} \ar@{^{(}->}[d]_{\bar{j}} & U/\Gp \ar@{^{(}->}[d]^j \\ X \ar[r]^{{f}} & X/\Gp. }
\]

\begin{lem}\label{lem:the-previous-lemma}
There is a natural isomorphism of functors 
\begin{equation}\label{eq:the-functors} j_! \bar{f}_*^\Gp \cong {f}_*^\Gp\bar{j}_!\end{equation}
from the category of $\Gp$-equivariant sheaves on $U$ to the category of sheaves on $X/\Gp$.  Moreover, the  
functors~\eqref{eq:the-functors} are exact. 
\end{lem}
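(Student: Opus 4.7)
The plan is to exhibit a canonical natural transformation $\theta\colon j_!\bar{f}_*^\Gp \to f_*^\Gp\bar{j}_!$, verify it is a stalk-wise isomorphism, and then deduce exactness from that of the constituent functors. The guiding observation is that both composites restrict to $\bar{f}_*^\Gp$ on $U/\Gp$ and vanish on its complement.

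To construct $\theta$, I would first identify $j^*(f_*^\Gp\bar{j}_!\F)$ with $\bar{f}_*^\Gp\F$. For any open $V\subseteq U/\Gp$, the preimage $f^{-1}(V)$ lies in $U$ by $\Gp$-invariance of $U$, so
\[(f_*^\Gp\bar{j}_!\F)(V) = (\bar{j}_!\F)(f^{-1}(V))^\Gp = \F(f^{-1}(V))^\Gp = (\bar{f}_*^\Gp\F)(V).\]
Then $\theta$ is the natural composition $j_!\bar{f}_*^\Gp\F \cong j_!j^*(f_*^\Gp\bar{j}_!\F) \to f_*^\Gp\bar{j}_!\F$, where the first map is the identification above and the second is the counit of the $(j_!, j^*)$ adjunction for the open immersion $j$.

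For the stalk check, $\theta_\F$ is an isomorphism on $U/\Gp$ since the counit of $(j_!, j^*)$ for an open immersion is an isomorphism over the open set. For $[x]\notin U/\Gp$, openness of $U/\Gp$ in $X/\Gp$ allows me to find a neighborhood $V\ni [x]$ with $V\cap U/\Gp=\emptyset$; then $f^{-1}(V)\cap U=\emptyset$, so $(\bar{j}_!\F)(f^{-1}(V))=0$, making both $(j_!\bar{f}_*^\Gp\F)_{[x]}$ and $(f_*^\Gp\bar{j}_!\F)_{[x]}$ vanish. I expect this off--$U/\Gp$ case to be the main step to get right: it uses both the extension-by-zero nature of $\bar{j}_!$ and the $\Gp$-saturated openness of $U$.

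For exactness, $j_!$ and $\bar{j}_!$ are exact as extensions by zero along open immersions. The functor $\bar{f}_*^\Gp$ is exact because properness of the $\Gp$-action on $U$ together with discreteness of $\Gp$ forces every stabilizer $\stab_\Gp(x)$ to be finite, and the stalk formula $(\bar{f}_*^\Gp\F)_{[x]}\cong \F_x^{\stab_\Gp(x)}$ recalled in the excerpt then realizes $\bar{f}_*^\Gp$ as $\bar{f}_*$ followed by a finite-group invariants functor, which is exact over $\Q$ by Maschke. Exactness of $j_!\bar{f}_*^\Gp$ is then immediate, and exactness of $f_*^\Gp\bar{j}_!$ follows via the established isomorphism.
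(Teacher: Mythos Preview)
Your construction of $\theta$ via the $(j_!,j^*)$ adjunction is fine and, once unwound, agrees with the paper's presheaf-level map. The on-$U/\Gp$ stalk check and the exactness argument are also correct and match the paper's.

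The gap is precisely in the step you flagged as ``the main step to get right.'' You assert that openness of $U/\Gp$ in $X/\Gp$ lets you find, for $[x]\notin U/\Gp$, a neighbourhood $V\ni[x]$ with $V\cap U/\Gp=\emptyset$. This is backwards: openness of $U/\Gp$ makes its complement \emph{closed}, not open, so a boundary point of $U/\Gp$ admits no such $V$. Consequently $(\bar j_!\F)(f^{-1}(V))$ need not vanish for any $V\ni[x]$, and the vanishing of $(f_*^\Gp\bar j_!\F)_{[x]}$ requires a real argument.

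One that works: represent a germ by $s\in(\bar j_!\F)(f^{-1}(V))^\Gp$. By the description of extension by zero, $s$ is a section of $\F$ over $f^{-1}(V)\cap U$ which vanishes on $N\cap U$ for some open $N\subseteq f^{-1}(V)$ containing the chosen preimage $x$. Since $s$ is $\Gp$-invariant, it also vanishes on $gN\cap U$ for every $g\in\Gp$, hence on $\Gp N\cap U$. The quotient map $f$ is open, so $V':=f(N)$ is an open neighbourhood of $[x]$ with $f^{-1}(V')=\Gp N$; thus $s|_{f^{-1}(V')}=0$ in $(\bar j_!\F)(f^{-1}(V'))$, and the germ is zero. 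The paper records this stalk vanishing without spelling out the argument, but it does not follow from the neighbourhood claim you made.
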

\begin{proof}
    Fix a $\Gp$-equivariant sheaf $\F$ on $U$, and recall that $\bar{j}_!\F$ is the $\Gp$-equivariant sheaf on $X$  obtained as the sheafification of the presheaf $\bar{j}_!^p $ given by
    \[
    \bar{j}_!^p\F(V) = \begin{cases} \F(V) & \text{ if } V\subseteq U \\
    0 & \text{ otherwise},
    \end{cases}
    \]
    and similarly for $j_!$. By the universal property of sheafification, to give a map $j_!\bar{f}_*^\Gp \F \to {f}_*^\Gp\bar{j}_! \F$ is equivalent to giving a morphism of presheaves $j_!^p \bar{f}_*^\Gp \F \to {f}_*^\Gp\bar{j}_! \F$, which we produce next.

    For an open $V \subseteq X/\Gp$ we have $j_!^p \bar{f}_*^\Gp \F(V) = 0$ unless $V\subseteq U/\Gp$, in which case
    \[
    j_!^p (\bar{f}_*^\Gp \F)(V) = \bar{f}_*^\Gp \F(V) = (\F(\bar{f}^{-1}(V)))^\Gp 
    = (\F({f}^{-1}(V)))^\Gp
    \cong (\bar{j}_!\F({f}^{-1}(V)))^\Gp = {f}_*^\Gp\bar{j}_!\F (V).
    \]
    In either case, there is a canonical morphism $j_!^p \bar{f}_*^\Gp \F(V) \to {f}_*^\Gp\bar{j}_! \F(V)$, and these are clearly compatible with restrictions $V'\subset V$. Thus there is an induced map of sheaves
    \begin{equation*}
        j_!^p \bar{f}_*^\Gp \F \to {f}_*^\Gp\bar{j}_! \F.
    \end{equation*}It is an isomorphism since on stalks,
    \[
    (j_!\bar{f}_*^\Gp \F)_{[x]} \cong \begin{cases} (\F_x)^{\stab_\Gp(x)} & \text{ if } [x]\in U/\Gp \text{ with orbit representative } x\in U\\
    0 & \text{ if } [x]\notin U/\Gp 
    \end{cases}
    \cong ({f}_*^\Gp\bar{j}_! \F)_{[x]}
    \]
    with isomorphism induced by the morphism we defined. The two sheaves are thus isomorphic.

    Exactness of the functors~\eqref{eq:the-functors} follows from the facts that $j_!$ is always exact, and $\bar{f}_*^\Gp$ is exact for quotients by a proper action, as can be seen by the above description of stalks: properness implies that the groups $\stab_\Gp(x)$ are finite for all $x\in U$, and taking invariants of finite groups on $\Q$-vector spaces is exact. 
\end{proof}

Let $X$ be a simplicial complex.  A {\em constructible sheaf} $\F$ on $X$ is a sheaf that is constant on every open simplex: see \cite{kashiwara-schapira}, \cite{LV}.  Let $P(X)$ denote the poset of simplices in $X$, with a unique arrow $\tau\to \sigma$ whenever $\tau$ is a face of $\sigma$.  Recall that the category of constructible sheaves on $X$ is equivalent to the category of covariant functors from $P(X)$ to chain complexes. The equivalence is given by associating, to a constructible sheaf $\F$ on $X$, the functor
\[\tau \mapsto \F_\tau := \F(\st(\tau))\] on $P(X)$, where $\st(\tau) = \bigcup_{\tau\subseteq \sigma}\sigma^\circ$ is the open star of $\tau\in P(X)$.

If $U\subseteq X$ is a union of open stars of simplices, denote $P(U)$ for the subposet of simplices in $U$. By a \emph{constructible sheaf} on $U$ we always mean one that is constant on open simplices. It is therefore equivalent to a functor $\tau \mapsto \F_\tau$ defined on $P(U)$.
Now, suppose that a discrete group $\Gp$ acts on $U$ properly, and $\F$ is a $\Gp$-equivariant constructible sheaf on $U$.  We give a formula for the compactly supported cohomology of $U/\Gp$ with coefficients in $f_*^\Gp(\F)$ in the next proposition.  It is a natural generalization of an analogous result in Lazarev--Voronov's \cite[Proposition 2.4]{LV}, which considers only the case of $\Gp$-equivariant sheaves on $U$ that are pulled back from $U/\Gp$. 

Recall for a finite set $\tau$, we denote by $\det(\tau)$ the exterior power $\wedge^{|\tau|} \Q^\tau$, placed in degree $|\tau|$.  This is naturally a representation of the symmetric group permuting the elements of $\tau$. When $\tau$ is a simplex of a $\Gp$-equivariant simplicial complex $X$, then $\det(\tau)$ is a representation of $\stab_\Gp(\tau)$.
\begin{prop}[A \v{C}ech complex for proper quotients]\label{prop:formula for compact support cohomology}
    Let $X$ be a $\Gp$-simplicial complex such that $X/\Gp$ is compact, and let 
    $U\subseteq X$
    be a $\Gp$-invariant ideal subcomplex of $X$ on which the $\Gp$-action is proper.
    Let $f\col U\to U/\Gp$ 
    be the quotient map.
    
    For $\F$ a complex of $\Gp$-equivariant constructible sheaves on $U$, 
    corresponding to $\tau\mapsto \F_\tau$, 
    $$
    R\Gamma_c(U/\Gp; f_*^\Gp\F) \simeq \bigoplus_{[\tau]\in P(U)/\Gp} \left(\F_{\tau} \otimes \det(\tau)[1]\right)^{\stab_\Gp(\tau)},
    $$
    where complex on the right is the total complex obtained from a double complex
    \begin{equation}\label{eq:double}
    C^{p,q} = \bigoplus_{\substack{[\tau]\in P(U)/\Gp\\ \dim(\tau) = p}} \left( \F_\tau^q\otimes \det(\tau)\right)^{\stab_\Gp(\tau)}.
    \end{equation}
    The horizontal differential of~\eqref{eq:double} is the sum of maps $\F_{\tau} \to \F_{\{v\}\cup\tau}$ given by the functor structure of $\F$ 
    and $\tau_0\wedge \ldots \wedge \tau_p \mapsto v \wedge \tau_0\wedge \ldots \wedge \tau_p$ on the determinantal representation. The vertical differential is the internal differential of $\F$.
\end{prop}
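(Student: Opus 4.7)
The plan is to reduce the statement to the non-equivariant case on $U$, compute $R\Gamma_c(U;\F)$ via the stratification by open simplices, and then descend to the quotient $U/\Gp$ by taking $\Gp$-invariants, using properness to guarantee exactness. The key technical tools are Lemma~\ref{lem:the-previous-lemma} (to interchange $f_*^\Gp$ with extension by zero) and the fact that taking $\Gp$-invariants on $\Q$-vector spaces is exact when all stabilizers are finite, which holds here by properness.

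First, I would reduce to the case where $\F$ is a single $\Gp$-equivariant constructible sheaf concentrated in degree $0$. The horizontal functor $\F\mapsto C^{\bullet,\bullet}$ is exact in $\F$, both sides of the claimed equivalence commute with totalization of double complexes, and the vertical differential of the statement is precisely the internal differential of $\F$; so the general case follows from the single-sheaf case by totalization.

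Next, for the non-equivariant calculation on $U$, I would use the stratification by open simplices, $U = \bigsqcup_{\tau\in P(U)} \tau^\circ$. Filtering by skeleta $U^{(p)}$ yields a spectral sequence
\[
E_1^{p,q} = H^{p+q}_c\bigl(U^{(p)}\setminus U^{(p-1)};\F\bigr) = \bigoplus_{\dim\tau = p} H^{p+q}_c(\tau^\circ;\F_\tau) \Longrightarrow H^{p+q}_c(U;\F).
\]
Since $\tau^\circ \cong \R^p$ and $\F$ restricts to the constant sheaf with value $\F_\tau$ on $\tau^\circ$, we have $H^*_c(\tau^\circ;\F_\tau)\cong \F_\tau\otimes \det(\tau)[1]$, concentrated in degree $p$. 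Therefore $E_1^{p,q}$ vanishes for $q\neq 0$, the spectral sequence degenerates at $E_2$, and one obtains a quasi-isomorphism
\[
R\Gamma_c(U;\F) \simeq \bigoplus_{\tau\in P(U)} \F_\tau\otimes \det(\tau)[1],
\]
with differential induced by the face maps of $\F$ together with the wedge operation $v\wedge(-)$ on determinants.

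To descend, I would invoke Lemma~\ref{lem:the-previous-lemma} to identify $R\Gamma_c(U/\Gp;f_*^\Gp\F)$ with $R\Gamma_c(X/\Gp;f_*^\Gp\bar{j}_!\F)$, and then use properness together with the exactness of $\Gp$-invariants over $\Q$ to pull the invariants out past $R\Gamma_c$. Since the skeletal filtration above is manifestly $\Gp$-equivariant, taking $\Gp$-invariants of the non-equivariant formula and applying the standard identification of invariants of a direct sum indexed by a $\Gp$-set yields
\[
\bigoplus_{[\tau]\in P(U)/\Gp} \bigl(\F_\tau\otimes \det(\tau)[1]\bigr)^{\stab_\Gp(\tau)},
\]
which is exactly the double complex $C^{\bullet,\bullet}$ after reinstating the internal differential from the first step. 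The main obstacle will be careful bookkeeping of orientations: ensuring that $\stab_\Gp(\tau)$ acts on $\det(\tau)$ through the sign character of its permutation action on the vertex set of $\tau$, that this sign is compatible with the face-map differentials $v\wedge(-)$, and that every step above is genuinely $\Gp$-equivariant at the level of chain complexes (rather than just in cohomology). This is the technical heart of the equivariant passage and deserves the most care.
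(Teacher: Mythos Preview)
Your overall strategy is sound and reaches the same conclusion, but the route differs from the paper's in one substantive way and has one soft spot worth flagging.

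\textbf{Different route.} You compute $R\Gamma_c(U;\F)$ by filtering $U$ by skeleta and using $H^*_c(\tau^\circ;\F_\tau)\cong\F_\tau\otimes\det(\tau)[1]$ for each open cell. The paper instead extends $\F$ by zero to $X$ and computes $R\Gamma(X;j_!\F)$ via the \v{C}ech complex for the good open cover of $X$ by open stars of vertices; intersections of these stars are $\st(\tau)$, and $(j_!\F)(\st(\tau))=\F_\tau$ for $\tau\in P(U)$ and $0$ otherwise. Both methods produce the same bicomplex. The \v{C}ech approach has the advantage that it yields an honest chain complex with the stated differential directly, with no spectral-sequence degeneration or chain-level lifting to justify; your approach is perhaps more geometric but requires the extra step you acknowledge of promoting the $E_1$-page to a genuine quasi-isomorphism.

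\textbf{A point to tighten.} Your phrase ``exactness of $\Gp$-invariants over $\Q$'' is not correct as stated when $\Gp$ is infinite: $(-)^\Gp$ is not exact, and one must compute $R\inv_\Gp$. What makes the argument work is that each term of the (\v{C}ech or cellular) complex, viewed as a $\Gp$-representation, is a product of coinduced modules $\mathrm{Ind}_{\stab_\Gp(\tau)}^\Gp(\F_\tau\otimes\det(\tau))$ from \emph{finite} subgroups $\stab_\Gp(\tau)$; over $\Q$ these are injective $\Gp$-modules, so derived invariants agree with ordinary invariants \emph{on this particular complex}. This is exactly how the paper justifies the passage, and it is also what your ``standard identification of invariants of a direct sum indexed by a $\Gp$-set'' is implicitly using. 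Making this explicit closes the only real gap in your argument.
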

\begin{proof}
    Using Lemma~\ref{lem:the-previous-lemma}, there is a natural equivalence of derived functors
    \[
    R\Gamma_c(U/\Gp; f_*^\Gp\F) \simeq R\Gamma(X/\Gp ; j_! f_*^\Gp \F) \simeq R\Gamma(X/\Gp ; f_*^\Gp j_! \F).
    \]
    Let $\inv_\Gp$ be the functor that takes $\Gp$-invariants of a rational $\Gp$-representation.
    Since there is a natural identification of composites $\Gamma\circ f_*^\Gp = \inv_\Gp\circ \Gamma$ and by exactness of $j_!$ and $f_*^\Gp j_!$ (Lemma~\ref{lem:the-previous-lemma}), there is an equivalence of derived functors
    \[
    R\Gamma(X/\Gp ; f_*^\Gp j_! \F) \simeq R(\inv_\Gp)\circ R\Gamma(X; j_!\F).
    \]
    We next compute $R\Gamma(X,j_!\F)$ using a \v{C}ech bicomplex. 
    
    When $\F$ is
    a constructible sheaf on $U$, 
    its extension by zero $j_!\F$ to $X$ is the constructible sheaf associated with the coefficient system
    \[
    \tau \mapsto \begin{cases}
        \F_\tau & \text{ if } \tau^\circ \subseteq U \\ 
        0 & \text{ otherwise}.
    \end{cases}
    \]
    Consider the good open cover of the simplicial complex $X$ by open stars of vertices, so that the intersection $\st(v_0)\cap \ldots\cap \st(v_p) = \st([v_0,\ldots,v_p])$ when $\{v_0,\ldots,v_p\}$ is a simplex, and is empty otherwise. The corresponding \v{C}ech bicomplex computing $R\Gamma(X,j_!\F)$ takes the form
    \[
    \prod_{\tau\in \P(X)} (j_!\F)_\tau \otimes \det(\tau)
 \cong \prod_{\tau\in \P(U)} \F_\tau \otimes \det(\tau)
    \]
    with bigrading and differentials as stated in the lemma.    

    Now, the $\Gp$-action on $\P(U)$ permutes the terms of the bicomplex, and thus identifies it with a product of (co)induced representations
    \[
    \prod_{\tau\in \P(U)} \F_{\tau}\otimes \det(\tau) \cong \prod_{[\tau]\in \P(U)/\Gp} \operatorname{Ind}_{\stab_\Gp(\tau)}^\Gp (\F_\tau\otimes \det(\tau))
    \]
    Since all the stabilizers are finite groups and $\F_\tau$ is a complex of rational vector spaces, these coinduced representations are coinductions of injectives and hence themselves injectives in the category of rational $\Gp$-representations. It follows that their derived $\Gp$-invariants coincide with the ordinary invariants
    \[
    \bigg(    \prod_{[\tau]\in \P(U)/\Gp} \operatorname{Ind}_{\stab_\Gp(\tau)}^\Gp (\F_\tau\otimes \det(\tau)) \bigg)^\Gp \cong \prod_{[\tau]\in \P(U)/\Gp} \left(\F_\tau\otimes \det(\tau)\right)^{\stab_\Gp(\tau)},
    \]
    as claimed, noting that this last product is now finite and hence isomorphic to the direct sum.
\end{proof}
A geometric application of this sheaf cohomology statement is the following.

\begin{cor} \label{cor:geometric cech bicomplex}
    Let $\Gp$ and $U\subseteq X$ satisfy the hypotheses of Proposition~\ref{prop:formula for compact support cohomology}, and let $\tilde{U}$ be any $\Gp$-space equipped with a $\Gp$-equivariant proper map $\ol{\pi}\colon \tilde{U}\to U$. 
    Suppose further that $\ol{\pi}$ is ``simplicially locally trivial" in the sense that for every open simplex $\tau^\circ \subseteq U$ there exists a topological space $F_\tau$ so that there is a pullback square
    \[
    \xymatrix{
    \tau^\circ \times F_\tau \ar[r]\ar[d] & \tilde{U} \ar[d] \\
    \tau^\circ \ar@{^(->}[r] & U.
    }
    \]
    Then $\F:=R\ol{\pi}_*\ul{\Q}_{\tilde{U}}$ is a complex of constructible sheaves on $U$, corresponding to the functor 
    \begin{equation}\label{eq:Rpi*Q constructible}
    \F_\bullet\colon \tau\mapsto R\Gamma(F_\tau;\ul{\Q}),
    \end{equation}
    so that we have a \v{C}ech complex
\begin{equation}\label{eq:corollary_cech}
    R\Gamma_c(\tilde{U}/\Gp ;\ul{\Q} ) 
    \simeq R\Gamma_c(U/\Gp;f_*^{\Gp}\F)
    \simeq \bigoplus_{\tau \in P(U)/\Gp} \left( R\Gamma(F_\tau;\ul{\Q})\otimes \det(\tau)[1] \right)^{\stab_\Gp(\tau)}
\end{equation}
with differentials as in Proposition \ref{prop:formula for compact support cohomology}.
\end{cor}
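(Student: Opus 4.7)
My plan is to decompose the argument into three steps:
(i) identify $\F := R\bar\pi_* \underline{\Q}_{\tilde{U}}$ as a complex of constructible sheaves on $U$ whose associated functor $\F_\bullet$ on $P(U)$ sends $\tau \mapsto R\Gamma(F_\tau;\underline{\Q})$;
(ii) verify the comparison $R\Gamma_c(\tilde{U}/\Gp;\underline{\Q}) \simeq R\Gamma_c(U/\Gp; f_*^\Gp \F)$;
and (iii) invoke Proposition~\ref{prop:formula for compact support cohomology} to produce the \v{C}ech complex on the right-hand side of~\eqref{eq:corollary_cech}.

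For step (i), the simplicial-local-triviality hypothesis exhibits, for each open simplex $\tau^\circ \subseteq U$, a pullback identifying $\bar\pi^{-1}(\tau^\circ) \cong \tau^\circ \times F_\tau$. Since $\bar\pi$ is proper, proper base change along the open inclusion $\tau^\circ \hookrightarrow U$ gives $\F|_{\tau^\circ} \simeq R(\mathrm{pr}_1)_* \underline{\Q}_{\tau^\circ \times F_\tau}$, and by the K\"unneth formula (equivalently, by contractibility of $\tau^\circ$) this is the constant complex of sheaves on $\tau^\circ$ with value $R\Gamma(F_\tau;\underline{\Q})$. This simultaneously proves constructibility of $\F$ and identifies the functor $\F_\bullet$; the transition maps for $\tau \subseteq \sigma$ are determined by the sheaf structure of $\F$ and need not be described explicitly here.

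For step (ii), consider the commutative square formed by $\bar\pi\col \tilde{U}\to U$, its induced $\Gp$-quotient $\pi\col \tilde{U}/\Gp \to U/\Gp$, and the quotient maps $\tilde{f},f$. Since $\bar\pi$ is proper and $\Gp$-equivariant, $\pi$ is proper; and since $\stab_\Gp(\tilde{x}) \subseteq \stab_\Gp(\bar\pi(\tilde{x}))$, the stabilizers on $\tilde{U}$ are finite, so $\tilde{f}_*^\Gp$ is exact (as in the proof of Lemma~\ref{lem:the-previous-lemma}). A direct computation on sections gives the equality of functors $f_*^\Gp \bar\pi_* = \pi_* \tilde{f}_*^\Gp$ on $\Gp$-equivariant sheaves. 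Deriving both sides using exactness of $f_*^\Gp$ and $\tilde{f}_*^\Gp$ yields $f_*^\Gp R\bar\pi_* \simeq R\pi_* \tilde{f}_*^\Gp$; applied to $\underline{\Q}_{\tilde{U}}$ (noting $\tilde{f}_*^\Gp \underline{\Q}_{\tilde{U}} \cong \underline{\Q}_{\tilde{U}/\Gp}$), this reads $f_*^\Gp \F \simeq R\pi_* \underline{\Q}_{\tilde{U}/\Gp}$. Applying $R\Gamma_c(U/\Gp;-)$ and using properness of $\pi$ to replace $R\pi_*$ by $R\pi_!$ identifies the left-hand side with $R\Gamma_c(\tilde{U}/\Gp;\underline{\Q})$, as desired.

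Finally, for step (iii), $\F$ is a complex of $\Gp$-equivariant constructible sheaves on $U$ (the equivariance being inherited from $\bar\pi$), so Proposition~\ref{prop:formula for compact support cohomology} directly yields the bicomplex description in~\eqref{eq:corollary_cech}. I expect step (ii) to be the main obstacle: one must confirm that the derived pushforward $R\bar\pi_*$ really commutes with the invariant pushforward $f_*^\Gp$, and this hinges on finiteness of stabilizers together with working over $\Q$ to ensure exactness of both $f_*^\Gp$ and $\tilde{f}_*^\Gp$.
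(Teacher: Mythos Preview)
Your proposal is correct and follows essentially the same route as the paper's proof: both use proper base change to identify $\F$ with the functor $\tau\mapsto R\Gamma(F_\tau;\ul{\Q})$, both verify the commutation $f_*^\Gp R\ol\pi_* \simeq R\pi_*\,\ol f_*^\Gp$ by checking $f_*^\Gp\ol\pi_* = \pi_*\ol f_*^\Gp$ on sections and then invoking exactness of the invariant pushforwards (finite stabilizers, rational coefficients), and both conclude by properness of $\pi$ and Proposition~\ref{prop:formula for compact support cohomology}. The only difference is the order in which the steps are presented.
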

\begin{proof}
    First, $\Gp$-equivariance of $\ol{\pi}$ forces the $\Gp$-action on $\tilde{U}$ to be proper as well. Consider the commutative square
    \[
    \xymatrix{
    \tilde{U} \ar[r]^-{\ol{f}} \ar[d]_{\ol{\pi}} & \tilde{U}/\Gp \ar[d]^{\pi} \\
    U \ar[r]^-f & U/\Gp
    }
    \]
    where the horizontal arrows are the quotient maps. Properness of $\ol{\pi}$ implies that $\pi$ is proper as well.
    
    By $\ol{f}_*^{\Gp}\ul{\Q}_{\tilde{U}} \cong \ul{\Q}_{\tilde{U}/\Gp}$, there is a natural quasi-isomorphism 
    \begin{equation}\label{eq:qiso1}
    R\Gamma_c(\tilde{U}/\Gp;\ul{\Q}_{\tilde{U}/\Gp}) \simeq R\Gamma_c(\tilde{U}/\Gp;\ol{f}_*^{\Gp}\ul{\Q}_{\tilde{U}}).
    \end{equation}
    Furthermore, properness of $\ol{\pi}$ gives $\pi_! = \pi_*$ so that
    \begin{equation}\label{eq:qiso2}
    R\Gamma_c(\tilde{U}/\Gp; \ol{f}_*^{\Gp}\ul{\Q}_{\tilde{U}}) \simeq R\Gamma_c(U/\Gp; R\pi_*\ol{f}_*^{\Gp}\ul{\Q}_{\tilde{U}}).
    \end{equation}

Now, the composite functors $f_*^{\Gp} \ol{\pi}_*$ and $\pi_* \ol{f}_*^{\Gp}$ coincide, since for any equivariant sheaf $\F$ and all open $V\subseteq \Delta_g^{(2)}$ one has 
    \[
    f_*^\Gp\ol{\pi}_*\F(V) = ( f_*\ol{\pi}_*\F(V) )^\Gp = 
    ( \pi_* \overline{f}_*\F(V) )^\Gp = ( \overline{f}_*\F(\pi^{-1}(V)) )^\Gp = \pi_* \overline{f}_*^\Gp\F(V).
    \]
Since $f_*^{\Gp}$ is exact, the same commutation holds for the derived functors,
    \begin{equation}\label{eq:qiso3}
    f_*^{\Gp} \circ R\ol{\pi}_*
    \simeq 
    R\left( f_*^{\Gp} \circ \ol{\pi}_* \right) 
    \simeq 
    R\left( \pi_* \circ \ol{f}_*^{\Gp} \right) 
    \simeq 
    R\pi_*\circ \ol{f}_*^{\Gp}.
    \end{equation}
    A routine application of proper base change (see, e.g., \cite[Remark 2.5.3]{kashiwara-schapira}) shows that 
    $\F:= R\ol{\pi}_*\ul{\Q}_{\tilde{U}}$ is constructible on $U$ and corresponds to the covariant functor
    \begin{equation}\label{eq:ftau}
    \F_\bullet\colon \tau \mapsto R\Gamma(F_\tau ; \ul{\Q}).
    \end{equation}
    Composing the equivalences \eqref{eq:qiso1}, \eqref{eq:qiso2}, and \eqref{eq:qiso3}, we get
    \[    R\Gamma_c(\tilde{U}/\Gp;\ul{\Q}_{\tilde{U}/\Gp}) \simeq R\Gamma_c(U/\Gp; f_*^{\Gp}\F),
    \]
    where $\F = R\ol{\pi}_*\ul{\Q}_{\tilde{U}}$,
    so Proposition \ref{prop:formula for compact support cohomology} applies and 
    with \eqref{eq:ftau}
    yields the claimed equivalence to the \v{C}ech double-complex.
\end{proof}

\begin{rmk}\label{rmk:Cech differential using retractions}
    The sheaf structure of $\F=R\ol{\pi}_*\ul\Q_{\tilde{U}}$ from the last corollary furnishes restriction maps
    \[
    R\Gamma(F_\tau;\ul\Q) \to R\Gamma(F_\sigma;\ul\Q)
    \]
    for every face inclusion $\tau\subseteq \sigma$, determining the horizontal \v{C}ech differential in \eqref{eq:corollary_cech} as an alternating sum over these.
    Computing the restriction maps in full generality is subtle, but the following geometric input makes it possible: if for a simplex $\tau$ there exist compatible retractions
    \[
    \xymatrix{
    \ol{\pi}^{-1}(\st(\tau)) \ar[r]^-{\tilde{r}} \ar[d] & \ol{\pi}^{-1}(\tau^\circ) \ar[d] \\
    \st(\tau) \ar[r]^-{r} & \tau^\circ
    }
    \]
    then for every face inclusion $\tau\subseteq \sigma$ the restriction map $\F_{\tau} \to \F_{\sigma}$ is the induced pullback $\tilde{r}^*\colon R\Gamma(F_\tau;\ul{\Q}) \to R\Gamma(F_\sigma;\ul{\Q})$ on the fiber over any point $x\in \sigma^\circ \subseteq \st(\tau)$.

    Indeed, the inclusions $\{r(x)\}\into \tau^\circ \into \st(\tau)$ have retractions $ \{r(x)\}\overset{c}\leftarrow \tau^\circ \overset{r}\leftarrow \st(\tau)$, giving rise to the following diagram of restriction maps
    \[
    \xymatrix{
    \F_\tau = R\Gamma(\st(\tau);\F) \ar[r]_-{i_{\tau^\circ}^*} \ar@{<--}@/^1pc/[r]^-{r^*} & 
    R\Gamma(\tau^\circ;\F|_{\tau^\circ}) \ar[r]_-{i^*_{r(x)}} \ar@{<--}@/^1pc/[r]^-{c^*} & 
    \F_{r(x)}
    }
    \]
    with $r^*$ and $c^*$ are left-inverses to their respective restriction map $i_{\tau^\circ}^*$ and $i^*_{r(x)}$. But $\F$ is constructible, so $i_{\tau^\circ}^*$ and $i^*_{r(x)}$ are quasi-isomorphisms, and thus $r^*$ and $c^*$ are also homotopy right-inverses. 
    
    On the other hand, 
    this diagram is naturally quasi-isomorphic to
    \[
    \xymatrix{
    R\Gamma(\ol{\pi}^{-1}(\st(\tau));\ul\Q) \ar[r]_-{i_{\ol{\pi}^{-1}(\tau^\circ)}^*} \ar@{<--}@/^1pc/[r]^-{\tilde{r}^*} & 
    R\Gamma(\ol{\pi}^{-1}(\tau^\circ);\ul{\Q}) \ar[r]_-{i^*_{F_\tau}} \ar@{<--}@/^1pc/[r]^-{pr^*} & 
    R\Gamma(F_\tau;\ul{\Q}),
    }
    \]
    where $pr\colon \tau^\circ \times F_{\tau} \to F_\tau$ is the second projection, and $i_{F_\tau}\colon F_\tau \cong \{r(x)\}\times F_{\tau}\into \tau^\circ \times F_\tau$ is the inclusion of the fiber. It follows that the restriction map $\F_{\tau} \to \F_{\sigma}$ is equivalent to
    \[
    \xymatrix{
    R\Gamma(F_\tau;\ul\Q) \ar[r]^-{\tilde{r}^* (pr)^*} & 
    R\Gamma(\ol{\pi}^{-1}(\st(\tau));\ul{\Q}) \ar[r]^-{(i^{\tau}_{\sigma})^*} &
    R\Gamma(\ol{\pi}^{-1}(\st(\sigma);\ul\Q) \ar[r]^-{i_{F_\sigma}^*} & R\Gamma(F_\sigma;\ul\Q),
    }
    \]
whose composition is induced by $F_\sigma \cong \{x\}\times F_\sigma \into \ol{\pi}^{-1}(\st(\tau)) \overset{\tilde{r}}\to \tau^\circ\times F_\tau \to F_\tau$, as claimed.
\end{rmk}

\section{2-connected Outer Space and the universal 
$n$-marked graph}\label{sec:outer-space-and-friends}

All graphs shall be finite and connected, possibly with loops and parallel edges. The \emph{valence} of a vertex is the number of half-edges adjacent to it. The \emph{genus} of a graph is its first Betti number. If $\G$ is a connected graph, we say $\G$ is \emph{2-connected} if it remains connected, as a topological space, after removing any one vertex without removing its adjacent edges. Note that this is a slightly different definition than the classical one for simple graphs.\footnote{The subtlety here is that graphs 
may have loops, making them
not 2-connected. In the classical graph-theoretic definition one would also remove edges adjacent to the vertex, thereby erasing loops.}

Let $\Delta_{g}^{(2)}$ be the moduli space of $2$-connected graphs of genus $g$ and no bivalent vertices, equipped with a unit-volume metric. Similarly, $\Delta_{g,n}^{(2)}$ is the moduli space of such metric graphs further equipped with a choice of $n$ labeled vertices. Precise definitions are given below. We construct these spaces as quotients of ideal simplicial complexes with an action by the group $\Out(F_g)$, fitting into a pullback diagram
\begin{equation}\label{diagram: main diagram}
\begin{tikzcd}
\U_{g,n}^{(2)} \arrow[r,"\ol f"]\arrow[d,"\ol \pi"] & \Delta_{g,n}^{(2)} \arrow[d, "\pi"]\\
\mathrm{CV}_g^{(2)} \arrow[r, "f"] & \Delta_g^{(2)}
\end{tikzcd}
\end{equation}
to be used in the proof of Theorem~\ref{thm:ss}.
Here $\CV_g^{(2)}$ is the locus inside Culler-Vogtmann Outer Space (recalled below) of $2$-connected marked metric graphs, and $\U_{g,n}^{(2)}$ is an $n$-labeled analogue thereof.

Recall from Section~\ref{sec:graph complexes} the category $\GrphCat_g$ of graphs (finite connected multigraphs) of genus $g$, whose morphisms are compositions of isomorphisms and edge contractions. There is a geometric realization functor \[|\cdot|\col \GrphCat_g \to \Top\] that associates a topological space $|G|$ to a graph $G$.  We shall slightly abuse notation by writing $G$ both for the graph and for the topological space.

Let $\GrphCat_g^{(2)}$ denote the full subcategory of $\GrphCat_g$ whose objects are all $2$-connected graphs.
Consider a functor
\[\sigma^{(2)}=\sigma^{(2)}_g:\GrphCat^{(2),\mathrm{op}}_g \to \Top\]
that associates to a 2-connected graph $\G$ the space, denoted $\sigma^{(2)}_\G$, of unit volume length functions $l\col E(\G)\to \R_{\ge0}$ such that the contraction $\G/l^{-1}(0)$ is still a $2$-connected graph of genus $g$.  This space is a subspace  of the closed unit simplex of all unit volume length functions \[\overline{\sigma}_G = \{l\col E(\G)\to \R_{\ge0}: \sum l(e) = 1\}.\] In fact it is an ideal simplex, obtained from 
$\ol{\sigma}_G$
by removing closed faces. 

Then $\sigma^{(2)}$ is a functor: a morphism $\phi\col \G\to \G'$ determines an injection $E(\phi)\col E(\G')\rightarrow E(\G)$.  Write $T = E(\G) \setminus \im(E(\phi))$.  Then $\phi$ determines a map $\phi_*\col \overline{\sigma}_{\G'}\to \overline{\sigma}_\G$, which is an isomorphism onto the face of $\overline{\sigma}_\G$ parametrizing those $l\col E(\G)\to \R_{\ge0}$ that are $0$ on $T$.  Then $\phi_*$ restricts to a map \begin{equation}\label{eq:phi}\phi_*\col \sigma_{\G'}^{(2)} \rightarrow \sigma_{\G}^{(2)},\end{equation}
verified as follows: given $\ell'\in \sigma_{\G'}^{(2)}$, let $S = (\ell')^{-1}(0) \subset E(\G')$, so that $\G'/S$ is $2$-connected and genus $g$.  Setting $\ell = \phi_*(\ell')$ and identifying $E(\G')$ with a subset of $E(\G)$ via the injection $E(\phi)$, we have $\ell^{-1}(0) = S\cup T$, and thus $\G/(S\cup T) \cong \G'/S$ is also $2$-connected and of genus $g$. Hence $\ell \in \sigma_\G^{(2)}$, and $\phi_*(\sigma_{\G'}^{(2)})\subset \sigma_{\G}^{(2)}$ as desired.

As above,  denote the graph with one vertex and $g$ loops by $R_g$, called the \emph{rose graph}.

\subsection{2-connected $n$-marked Outer Space}\label{sec:2connected}
We define a space $\cU_{g,n}^{(2)}$ that parametrizes 2-connected graphs of genus $g$, equipped with $n$ labeled vertices and a homotopy class of homotopy equivalences from $R_g$. When $n=0$, the space $\cU_{g}^{(2)} = \cU_{g,0}^{(2)}$ is isomorphic, as an ideal simplicial complex, to the subspace of the Culler--Vogtmann Outer Space \cite{culler-vogtmann} parametrizing 2-connected graphs. 
As such, we will denote $\cU_{g,0}^{(2)}$ by $\CV_g^{(2)}$.
When $n>0$, the space $\cU_{g,n}^{(2)}$ is reminiscent of the Hatcher--Vogtmann Outer Space for thorned graphs (or graphs with leaves, see \cite{hatcher-vogtmann-homology-stability}), but the two spaces differ crucially in that points in our space $\cU_{g,n}^{(2)}$ are graphs with a homotopy marking by the rose $R_g$, and \emph{not} to the $n$-thorned rose.

Let $I_{g,n}^{(2)}$ be the category whose objects are triples $(\G,m,h)$, where
\begin{itemize}
\item $\G$ is a 2-connected multigraph,
\item $m\col \{1,\ldots,n\}\to V(\G)$ is any function, and
\item $h = [H]$ is a homotopy class of homotopy equivalences $H\col R_g\to \G$,
\end{itemize}
such that $m^{-1}(v) \ne \emptyset$ for any bivalent vertex $v\in V(\G)$. In words, these are $R_g$-marked, 2-connected graphs with $n$ labels  and no unlabeled bivalent vertices. A morphism $(\G,m,h) \to (\G',m',h')$ consists of a morphism of graphs (composition of isomorphisms and edge contractions) $j: \G \to \G'$ that satisfies:
\begin{itemize}
    \item $m' = V(j)\circ m$, where $V(j): V(\G)\to V(\G')$ is the induced map on vertices, and
    \item the diagram
\[\begin{tikzcd}
& \G \arrow[dd,"j"]\\
R_g \arrow[ru,"h"] \arrow[rd,"h'"'] & \\
& \G'
\end{tikzcd}\]    
    commutes in the homotopy category of topological spaces.
\end{itemize}

We also consider the category $\Gamma_{g,n}^{(2)}$ of pairs $(\G,m)$ subject to the same conditions, obtained by omitting $h$ from the definition of $I_{g,n}^{(2)}$ above. We call such a pair $(\G,m)$ a 2-connected \emph{$n$-marked\footnote{As in existing literature, the word \emph{marking} is used in two distinct contexts. A \emph{homotopy marking} of a graph is a homotopy equivalence with a rose graph, while an \emph{$n$-marking} is a labeling of $n$ (not necessarily distinct) vertices of the graph.} 
graph}.
Now, there are obvious faithful forgetful functors 
\[I^{(2)}_{g,n}
\xrightarrow{F_n} 
\Gamma^{(2)}_{g,n} 
\xrightarrow{P_n}
\GrphCat^{(2)}_g, \quad (\G,m,h) \mapsto (\G,m) \mapsto \G.\]
Pulling back the functor $\sigma^{(2)}$ along these lets us define the spaces we want.
\begin{defn}\label{def:Ugn and CVg}
    The \emph{2-connected, $n$-marked Outer Space} $\cU_{g,n}^{(2)}$ is the colimit of \[\sigma^{(2)} \circ P_n^\mathrm{op} \circ F_n^\mathrm{op}: I_{g,n}^{(2),\op} \to \Top.\]
    The  \emph{2-connected, $n$-marked moduli space of tropical curves} $\Delta_{g,n}^{(2)}$ is the colimit of \[\sigma^{(2)}\circ P_n^\op: \Gamma_{g,n}^{(2),\op}\to \Top.\] 
\end{defn}
    Explicitly, 
    \begin{equation} \label{eq:ugn2}
    \cU_{g,n}^{(2)} = \left( \coprod_{(\G,m,h)} \sigma_{\G}^{(2)} \right)\raisebox{-2ex}{$/\sim$}
    \end{equation}
    where $(\ell\col E(\G)\to \R_{\geq 0}) \sim (\ell'\col E(\G')\to \R_{\geq 0})$ if there exists a morphism $\phi\col (\G,m,h)\to (\G',m',h')$ such that $\phi_*(\ell') = \ell$ (Equation~\eqref{eq:phi}).  By suppressing mention of $h$ in~\eqref{eq:ugn2}, we also have an analogous explicit description of $\Delta_{g,n}^{(2)}$.

The universal property of colimits furnishes a natural map $f_n\colon \cU_{g,n}^{(2)} \to \Delta_{g,n}^{(2)}$.

\begin{rmk}
    When $n=0$ one might as well omit the labeling $m\colon [0]\to V(\G)$, and consider the categories of pairs $(\G,h)$ and $\G$, denoted $I^{(2)}_g$ and $\Gamma^{(2)}_g$. Accordingly, we omit the $n$ from the notation in the spaces defined using these categories.
    Recall that we denote $\CV_g^{(2)}:=\cU_{g,0}^{(2)}$ in this case, as it is isomorphic to the ideal simplicial subcomplex of Outer Space parametrizing 2-connected graphs.
\end{rmk}

\subsection{Repeated marking loci}
The spaces $\cU_{g,n}^{(2)}$ and $\Delta_{g,n}^{(2)}$ have subspaces parametrizing graphs in which some vertex is labeled by more than one element in $[n]$. 
Consider the full subcategory $I_{g,n}^{(2,r)}\subseteq I_{g,n}^{(2)}$ whose objects are triples $(\G,m,h)$ such that $m$ is not injective. Let $\cU_{g,n}^{(2,r)}$ be the colimit of the restriction to $I_{g,n}^{(2,r),\mathrm{op}}$  of \[\sigma^{(2)}\circ P_n^\op \circ F_n^\op.\] It naturally includes into $\cU_{g,n}^{(2)}$, and  and will be called the \emph{repeated marking locus}.  Note that $\cU_{g,n}^{(2,r)}$ is a closed subspace of $\cU_{g,n}^{(2)}$, since 
 edge contraction preserves the non-injectivity of a vertex labeling.

Similarly, let $\Gamma_{g,n}^{(2,r)}$ be the full subcategory of $\Gamma_{g,n}^{(2)}$ on objects $(G,m)$ where $m$ is not injective.  Then define $\Delta_{g,n}^{(2,r)}$ to be the colimit of the restriction to $\Gamma_{g,n}^{(2,r),\mathrm{op}}$ of $\sigma^{(2)}\circ P_n^{\mathrm{op}}$.  It is identified with a closed subspace $\Delta_{g,n}^{(2,r)} \subseteq \Delta_{g,n}^{(2)}$.

\subsection{Forgetting marked vertices}

Forgetting the $n$ marked vertices poses a slight complication, due to our requirement that bivalent vertices must be labeled. This is resolved by \emph{suppressing} such vertices: that is, deleting the bivalent vertex and connecting its two neighbors directly by an edge.

Define a functor
\begin{equation}\label{eq:S_n functor}
S_n:\Gamma_{g,n}^{(2)} \to \Gamma_{g}^{(2)}
\end{equation}
where $S_n(\G,m)$ is the graph $\G'$ obtained from $G$ by suppressing bivalent vertices. 
In particular, $\G$ is identified with a subdivision of $\G'$, and every edge $e\in E(\G)$ is obtained from subdividing a unique edge of $\G'$.
Then, given a function $\ell\colon E(\G)\to\R_{\geq0}$, define $\ell'\colon E(\G')\to\R_{\geq0}$
by $\ell'(e') = \sum\ell(e)$, where the sum is taken over edges $e\in E(\G)$ obtained from the subdivision of $e'$.
This defines a map on simplices $\sigma_{\G}^{(2)}\to\sigma_{\G'}^{(2)}$, giving the components of a natural transformation
\[\pi\colon \sigma^{(2)}\circ P_n^\op \Rightarrow \sigma^{(2)}\circ P_0^\op\circ S_n^\op\]
of functors $\Gamma_{g,n}^\op\to\Top$.
Such a natural transformation induces a map on the colimits of $\sigma^{(2)}\circ P_n^{\mathrm{op}}$ and $\sigma^{(2)}\circ P_0^{\mathrm{op}}$, i.e., the desired map $\Delta_{g,n}^{(2)}\to\Delta_g^{(2)}$.

Introducing a homotopy equivalence from $R_g$, there is an analogous functor $I_{g,n}^{(2)} \to I_g^{(2)}$ given by $F^*(S_n)\col (G,m,h)\mapsto (\G',h)$, where $\G'$ is determined by $\G$ as above. These give a commutative diagram\footnote{In fact, $F^*(S_n)$ is simply the pullback of $S_n$ along $F$, on the $2$-fiber product diagram~\eqref{eq:2-fiber-prod}.  
}
\begin{equation}\label{eq:2-fiber-prod}\begin{split}
\xymatrix{
I^{(2)}_{g,n} \ar[r]^{F_n} \ar[d]_{F^*(S_n)} & \Gamma_{g,n}^{(2)} \ar[d]^{S_n} \\
I^{(2)}_g \ar[r]^{F} & \Gamma_g^{(2)}.
}
\end{split}
\end{equation}
The natural transformations $\pi$ and $F^*(\pi)$ define a commutative square of functors and thus a commutative square of their colimits in topological spaces
\begin{equation} \begin{split}
\xymatrix{ \cU_{g,n}^{(2)} \ar[r]^{\ol{f}} \ar[d]_{\ol{\pi}} & \Delta_{g,n}^{(2)} \ar[d]^{\pi} \\ \CV_g^{(2)} \ar[r]^f & \Delta_g^{(2)}.}
\end{split}
\end{equation}

\subsection{$\Out(F_g)$-action}
Recall from \S\ref{sec:graph complexes} that  the group $\Out(F_g) \cong \operatorname{hEq}(R_g,R_g)$ acts naturally and simply transitively on the set of homotopy classes of homotopy equivalences $h\col R_g\to \G$.  Thus there are induced actions of $\Out(F_g)$ on the spaces 
$\cU_{g,n}^{(2)}$ and $\CV_g^{(2)}$, so that the map $\ol{\pi}$ between them is equivariant.  Precisely, these actions arise as follows: 
there is a natural action of $\Out(F_g) \cong \operatorname{hEq}(R_g,R_g)$ on the entire category $I_{g,n}^{(2)}$ and, compatibly, on $I_{g}^{(2)}$.   Formally, this means that $\Out(F_g)$ acts on the respective identity functors by natural transformations.  Functoriality of colimits gives the induced actions of $\Out(F_g)$ on  $\cU_{g,n}^{(2)}$ and $\CV_g^{(2)}$.

Since the $\Out(F_g)$-action on equivalences $\operatorname{hEq}(R_g,G)$ is simply transitive, the quotient of $I_{g,n}^{(2)}$ by $\Out(F_g)$ coincides with the category in which the equivalence $h:R_g\to \G$ is suppressed, and similarly for $I_{g}^{(2)}$. But these are exactly the categories $\Gamma_{g,n}^{(2)}$ and $\Gamma_{g}^{(2)}$, respectively, and the forgetful maps $F_n$ and $F$ are the quotient maps.

Since the quotient by a group action, being itself a colimit, commutes with colimits, the above maps thus induce homeomorphisms fitting into the diagram on the left, below.
\begin{equation}\label{diag:quotients by Out}
\begin{split}
    \xymatrix{
\cU_{g,n}^{(2)}/\!\Out(F_g) \ar[r]^-{\cong} \ar[d] & \Delta_{g,n}^{(2)} \ar[d] \\
\CV^{(2)}_g\!/\!\Out(F_g) \ar[r]^-{\cong} & \Delta_g^{(2)}
}\qquad\qquad \xymatrix{
\cU_{g,n}^{(2,r)}\!/\!\Out(F_g) \ar[r]^-{\cong} \ar[d] & \Delta_{g,n}^{(2,r)} \ar[d] \\
\CV^{(2)}_g\!/\!\Out(F_g) \ar[r]^-{\cong} & \Delta_g^{(2)}.
}
\end{split}
\end{equation}
The same constructions and identifications apply to the repeated marking loci, yielding the diagram on the right. It is compatible with the diagram on the left via inclusions.

\section{Proof of Theorem~\ref{thm:main}}\label{sec:proof} 
The proof strategy is now to apply Corollary \ref{cor:geometric cech bicomplex} to the diagram in \eqref{diagram: main diagram} and its repeated marking locus analogue. For this purpose, we now examine the fibers of the map $\cU_{g,n}^{(2)}\to\CV_g^{(2)}$
forgetting marked points.
Their resulting cohomologies, as well as the relative cohomology of the pair, will be computed by graph complexes, as defined in Section \ref{sec:graph complexes}, thus completing the proof of Theorem \ref{thm:main}.

\subsection{Fibers of the map forgetting marked vertices} 
We begin by observing that the restriction of the map 
\[\overline{\pi}\col \mathcal{U}_{g,n}^{(2)}\longrightarrow \mathrm{CV}_g^{(2)}\] over an open simplex 
$\sigma_{\G}^\circ$ of $\CV_g^{(2)}$ is a trivial $\G^n$-bundle.  More precisely, for each $(G,h)\in I_g^{(2)}$, 
the diagram below on the left is a pullback square:
    \begin{equation}\label{eq:trivial-over-open-simplex}
    \xymatrix{
    {\sigma}^{\circ}_{\G}\times \G^n \ar[d] \ar[r] & \U_{g,n}^{(2)} \ar[d]^{\overline{\pi}} \\
    {\sigma}^{\circ}_{\G} \ar[r]^{\iota_{(\G,h)}} & \CV_g^{(2)}
    }
    \qquad\quad
    \xymatrix{
    {\sigma}^{\circ}_{\G} \times \diag_n(\G) \ar[d] \ar[r] & \U_{g,n}^{(2,r)} \ar[d]^{\overline{\pi}^{(r)}} \\
    {\sigma}^{\circ}_{\G} \ar[r]^{\iota_{(\G,h)}} & \CV_g^{(2)}.
    }
    \end{equation}
Similarly, let 
\begin{equation} \label{eq:pi-repeating}\overline{\pi}^{(r)}\col \mathcal{U}_{g,n}^{(2,r)}\longrightarrow \mathrm{CV}_g^{(2)}\end{equation}
be the restriction of $\overline{\pi}$ to $\mathcal{U}_{g,n}^{(2,r)}$.
Then we have the pullback square on the right in~\eqref{eq:trivial-over-open-simplex}, where 
\begin{equation}\label{eq:the-diag}
\diag_n(\G) = \{(m_1,\dots,m_n)\in\G^n \mid m_i=m_j \text{ for some } i\neq j\}.    \end{equation}
Moreover, the two pullback squares are compatible with each other via inclusions.

It remains to relate fibers over different open simplices, which we do via 
retractions onto open simplices of their open stars.
\begin{prop}\label{prop:retraction on stars}
    For every $(\G,h)\in I_{g}^{(2)}$ there exist compatible 
    retractions $\tilde{r}$ and $r$ making the diagram below commute:
 \begin{equation}\label{eq:compatible-retractions}\begin{split}
       \xymatrix{
        \ol{\pi}^{-1}(\st(\sigma_{(\G,h)})) \ar[r]^{\tilde{r}} \ar[d]_{\ol{\pi}} & \ol{\pi}^{-1}(\sigma_{(\G,h)}^\circ) \ar[d]^{\ol{\pi}} \\
        \st(\sigma_{(\G,h)}) \ar[r]^{r} & \sigma_{(\G,h)}^{\circ} 
        }
    \end{split}
   \end{equation}
Moreover, for every morphism $\phi\colon(\G',h')\to(\G,h)$ in $I_g^{(2)}$, the restriction of $\tilde{r}$ to the preimage of the open simplex $\sigma^\circ_{(\G',h')}$ is naturally isomorphic to the map
\[
r\times \phi^n \colon \sigma_{(\G',h')}^\circ \times (\G')^n \longrightarrow \sigma_{(\G,h)}^\circ\times \G^n
\]
given as the product of the edge contraction $\G'\to \G$ with $r\colon \sigma_{(\G',h')}\to \sigma_{(\G,h)}$.

The analogous statement with repeated markings, namely replacing $\overline{\pi}$ with $\overline{\pi}^{(r)}$ and replacing $\G^n$ and $(\G')^n$ with $\diag_n(\G)$ and $\diag_n(\G')$ respectively, also holds.

\end{prop}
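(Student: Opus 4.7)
The plan is to build $r$ and $\tilde{r}$ explicitly using the colimit presentations of $\CV_g^{(2)}$ and $\U_{g,n}^{(2)}$ from \S\ref{sec:outer-space-and-friends}. The open star $\st(\sigma_{(\G,h)})$ is a union of open simplices $\sigma_{(\G',h')}^\circ$, one for each object $(\G',h')\in I_g^{(2)}$ equipped with a morphism $\phi\col (\G',h')\to(\G,h)$; such a morphism is either an isomorphism (when $(\G',h')\cong (\G,h)$) or an iterated edge contraction, and it identifies $E(\G)$ with a subset of $E(\G')$ via $E(\phi)$ while giving a continuous collapse map $|\phi|\col \G'\to \G$ on topological spaces.

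First I would define the base retraction $r\col \st(\sigma_{(\G,h)})\to\sigma_{(\G,h)}^\circ$ on each open simplex $\sigma_{(\G',h')}^\circ$ by
\begin{equation*}
    r([(\G',h',\ell')]) = [(\G,h,\ell)], \quad \ell(e) = \ell'(E(\phi)(e))/Z, \quad Z = \sum_{f\in E(\G)} \ell'(E(\phi)(f)).
\end{equation*}
Positivity of $\ell'$ on an open simplex ensures $Z>0$, so $\ell$ lies in $\sigma_{(\G,h)}^\circ$; when $(\G',h')=(\G,h)$ and $\phi=\mathrm{id}$, we have $Z=1$ and $\ell=\ell'$, so $r$ is a retraction. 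For the total-space lift $\tilde{r}$, I would use the trivialization~\eqref{eq:trivial-over-open-simplex} to identify $\ol{\pi}^{-1}(\sigma_{(\G',h')}^\circ) \cong \sigma_{(\G',h')}^\circ\times (\G')^n$, and set
\begin{equation*}
    \tilde{r}(\ell',(p_1,\ldots,p_n)) = \bigl(r(\ell'),(|\phi|(p_1),\ldots,|\phi|(p_n))\bigr).
\end{equation*}
The commutativity of~\eqref{eq:compatible-retractions} and the retraction property on $\ol\pi^{-1}(\sigma_{(\G,h)}^\circ)$ are immediate from this definition. The naturality statement for a morphism $\phi\col (\G',h')\to(\G,h)$ is literally the formula for $\tilde{r}$ on the corresponding open simplex, with $r$ the base retraction and $|\phi|^n$ acting on the fiber. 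The repeated marking analogue follows because $|\phi|\col\G'\to\G$ is continuous, so it sends $\diag_n(\G')$ into $\diag_n(\G)$, and $\tilde{r}$ restricts to the desired retraction between the repeated-marking loci.

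The remaining task is checking that these simplex-wise formulas descend to continuous maps on the colimit topology. For any morphism $\psi\col (\G'',h'')\to(\G',h')$, functoriality of $E(-)$ and $|-|$ guarantees that applying $r$ and $\tilde{r}$ to a point $(\ell',(p_i))$ and then gluing along $\psi$ agrees with first gluing and then applying $r$ and $\tilde{r}$ on $\sigma_{(\G'',h'')}^\circ$; hence the assembled maps descend from $\coprod \sigma_{(\G',h')}^\circ$ and $\coprod \sigma_{(\G',h')}^\circ\times(\G')^n$ to well-defined continuous retractions on $\st(\sigma_{(\G,h)})$ and $\ol{\pi}^{-1}(\st(\sigma_{(\G,h)}))$.

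The main obstacle I expect is precisely this continuity check across simplex boundaries within the open star: the rescaling factor $Z$ is essential for the retraction to vary continuously as one approaches $\sigma_{(\G,h)}^\circ$ from a higher-dimensional open simplex where some edges are collapsing. Once the functoriality verification goes through for both $E(-)$ and the underlying topological map $|-|$, all other assertions (the commutativity of~\eqref{eq:compatible-retractions}, the retraction property, naturality, and preservation of the diagonals $\diag_n$) reduce to unwinding the formulas above.
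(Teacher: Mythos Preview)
Your proposal is correct and follows the same core idea as the paper: project onto the coordinates indexed by edges not contracted by $\phi$, then rescale to unit volume. The one presentational difference is in how $\tilde{r}$ is built. The paper defines it directly on the closed simplices $\sigma_{(G',m',h')}$ of $\U_{g,n}^{(2)}$ (indexed by $n$-marked graphs, possibly with subdivided edges) via the same edge-coordinate projection, which makes continuity immediate from the colimit description of $\U_{g,n}^{(2)}$. You instead use the fiber trivializations over open simplices of $\CV_g^{(2)}$, which makes the formula $r\times|\phi|^n$ transparent but leaves the continuity check slightly imprecise: the open star is not a colimit of the disjoint open simplices, so your final paragraph should really extend the formulas to the intersections $\sigma^{(2)}_{(G',h')}\cap\st(\sigma_{(\G,h)})$ (the locus where $Z>0$) and verify compatibility with face maps there. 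With that small adjustment, the two arguments coincide.
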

\begin{proof}
First, there is always a canonical retraction
\[r\col\st(\sigma_{(G,h)})\to\sigma^\circ_{(G,h)}\]
from the open star of a simplex, in this case $\sigma_{(G,h)}$, to the open interior of that simplex.  Here, this retraction can be described explicitly as follows: the open star $\st(\sigma_{(G,h)})$ is covered by its intersections with all of the closed simplices $\sigma_{(G',h')}$, for $(G',h')$ admitting an arrow to $(G,h)$ in the category $I_g^{(2)}$.  On such an intersection, the map 
\begin{equation}\label{eq:each-closed-simplex}\sigma_{(G',h')} \cap \st(\sigma_{(G,h)}) \to\sigma^\circ_{(G,h)}\end{equation}
projects onto the coordinates of $\sigma_{(G',h')}$ indexed by edges \emph{not} contracted by $\phi$, and then rescales all edge lengths to be unit volume.

Next, we define a retraction 
\[\tilde{r}\colon \ol{\pi}^{-1}(\st(\sigma_{(G,h)})) \to \ol{\pi}^{-1}(\sigma^\circ_{(G,h)})\]
which is compatible with $r$ in the sense that the diagram~\eqref{eq:compatible-retractions} commutes.
The preimage $\overline{\pi}^{-1}(\st(\sigma_{(G,h)}))$ is covered by its intersections with all closed simplices $\sigma_{(G',m',h')}$ where $(G',m',h')$ ranges over all objects of $I_{g,n}^{(2)}$ such that the ``forget marked points'' functor $F^*(S_n)$ (see \eqref{eq:2-fiber-prod}) sends $(G',m',h')$ to an object $(\ul{G}',\ul{h}')$ which in turn admits an arrow $(\ul{G}',\ul{h}')\xrightarrow{\phi}(G,h)$.  Described in reverse, $(G',m',h')$ may be obtained from $(G,h)$ by ``uncontracting'' some edges to obtain some $(\ul{G}',\ul{h}')$, and then introducing an $n$-marking.

Then the retraction $\tilde{r}$ is defined on the intersections
\[\sigma_{(G',m',h')} \cap \ol{\pi}^{-1}(\st(\sigma_{(G,h)}))\]
analogously to~\eqref{eq:each-closed-simplex}: by projecting to the coordinates of $\sigma_{(G',m',h')}$ indexed by exactly the edges of $G'$ which are subdivided pieces of edges in $\ul{G}'$ that are not contracted by $\phi$.  These glue together to the retraction $\tilde{r}$. The statement about the composition $\tilde{r}\tilde{\iota}$ follows from this explicit description of $\tilde{r}$.  

The corresponding statement with repeated markings has a similar proof.
\end{proof}

\begin{rmk}
In fact, $\cU_{g,n}^{(2)}$ and $\cU_{g,n}^{(2,r)}$ can alternatively be described as coends 
\[
\cU_{g,n}^{(2)} = \int^{(\G,h)} \sigma_\G \times \G^n \quad \text{ and similarly, } \quad \cU_{g,n}^{(2,r)} = \int^{(\G,h)} \sigma_\G \times \diag_n(\G)
\]
so that the map $\ol{\pi}$ is given by integrating the first projection onto $\int^{(\G,h)}\sigma_\G = \CV_g^{(2)}$.
\end{rmk}

\subsection{The tropical Serre spectral sequence}
The goal now is to obtain a double complex from which the spectral sequence of Theorem \ref{thm:main} arises. It will come from applying Corollary \ref{cor:geometric cech bicomplex} to the various spaces sitting above $\CV_g^{(2)}$. The trivializations ~\eqref{eq:trivial-over-open-simplex} over open simplices are exactly the input needed for the construction.

Our proof relies on understanding the \textit{non-repeated marking locus},
in which all marked vertices are distinct. It is realized as the complement of the repeated marking locus $\Delta_{g,n}^{(2,r)} \subseteq \Delta_{g,n}^{(2)}$, and so we gain access to its cohomology via the following.
\begin{lem}\label{lem:cech}
There is a commutative square 
\begin{center}
\begin{tikzpicture} \label{eq:cech for Delta repeated}
\node (r) at (0,0) {$C^*_c(\Delta_{g,n}^{(2,r)};\Q)$};
\node (2) at (0,3) {$C^*_c(\Delta_{g,n}^{(2)};\Q)$};
\node (G) at (8,3) {$\displaystyle\bigoplus_{[\G]\in\Iso(\Gamma_g^{(2)})}^{\phantom{{{X^X}^X}^X}} \left(C^*(\G^n;{\Q})\otimes\det(E(\G))[1]\right)^{\Aut(\G)}$};
\node (D) at (7.5,0) {$\displaystyle\bigoplus_{[\G]\in\Iso(\Gamma_g^{(2)})}^{\phantom{{{X^X}^X}^X}} \left(C^*(\diag_n(\G);{\Q})\otimes\det(E(\G))[1]\right)^{\Aut(\G)}$};
\draw[->] (2)--(r); 
\draw[->] (2) -- node[above]{$\simeq$} (G);
\draw[->] (r) -- node[above]{$\simeq$} (D);
\draw[->] (7.85,2.5) -- (7.85,.5);
\end{tikzpicture}
\end{center}
where the horizontal maps are quasi-isomorphisms and the vertical maps are induced by the inclusions $\Delta_{g,n}^{(2,r)}\into\Delta_{g,n}^{(2)}$ and $\diag_n(\G)\into \G^n$.

The terms on the right hand side are each a total complex of a double complex whose 
 $(p,q)$ component corresponds to $q$-cochains and graphs $\G$ with $p+1$ edges, and the differentials are given by edge contraction and the internal differential on each cochain complex.
\end{lem}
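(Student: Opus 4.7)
The plan is to apply Corollary~\ref{cor:geometric cech bicomplex} twice: first to the map $\overline{\pi}\colon \cU_{g,n}^{(2)}\to \CV_g^{(2)}$, which will produce the top horizontal equivalence, and then analogously to the restricted map $\overline{\pi}^{(r)}\colon \cU_{g,n}^{(2,r)}\to \CV_g^{(2)}$ for the bottom horizontal equivalence. In both cases I take $\Gp=\Out(F_g)$, and take the ambient simplicial complex $X$ of Proposition~\ref{prop:formula for compact support cohomology} to be the simplicial completion of Culler--Vogtmann Outer Space, whose quotient $X/\Out(F_g)\cong \Delta_g$ is compact, with ideal subcomplex $U=\CV_g^{(2)}$.

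I first verify the hypotheses of Corollary~\ref{cor:geometric cech bicomplex}. The action of $\Out(F_g)$ on $\CV_g^{(2)}$ is proper by classical results on Outer Space, and each of the maps $\overline\pi$, $\overline\pi^{(r)}$ is $\Out(F_g)$-equivariant by the construction of the actions in Section~\ref{sec:outer-space-and-friends}. Properness of $\overline\pi$ and $\overline\pi^{(r)}$ follows from compactness of their fibers $\G^n$ and $\diag_n(\G)$, respectively. Simplicial local triviality is precisely the content of the two pullback squares in~\eqref{eq:trivial-over-open-simplex}: the fiber over a point of the open simplex $\sigma^\circ_{(\G,h)}$ is $\G^n$ for $\overline\pi$, and $\diag_n(\G)$ for $\overline\pi^{(r)}$.

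Applying Corollary~\ref{cor:geometric cech bicomplex} and using the identifications $\cU_{g,n}^{(2)}/\Out(F_g)\cong \Delta_{g,n}^{(2)}$ and $\cU_{g,n}^{(2,r)}/\Out(F_g)\cong \Delta_{g,n}^{(2,r)}$ from~\eqref{diag:quotients by Out}, I then rewrite the resulting \v{C}ech double complexes to match the right-hand sides of the lemma. Three natural identifications are needed: (i) orbits of open simplices of $\CV_g^{(2)}$ under the $\Out(F_g)$-action are in bijection with $\Iso(\Gamma_g^{(2)})$, since $\Out(F_g)\cong \operatorname{hEq}(R_g,R_g)$ acts simply transitively on homotopy markings, so the orbit of $\sigma_{(\G,h)}$ is determined by $[\G]$; (ii) for each such simplex, the stabilizer $\stab_{\Out(F_g)}(\sigma_{(\G,h)})$ is identified with $\Aut(\G)$, acting on $\det(\tau)\cong \det(E(\G))$ through the induced action on edges and on $C^*(\G^n;\Q)$ (resp.~$C^*(\diag_n(\G);\Q)$) through functoriality; (iii) $R\Gamma(\G^n;\Q)$ is modelled by singular cochains $C^*(\G^n;\Q)$, and similarly for $\diag_n(\G)$. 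Commutativity of the square is naturality of Corollary~\ref{cor:geometric cech bicomplex} with respect to the equivariant map of sheaves $R\overline{\pi}^{(r)}_*\ul{\Q}\to R\overline{\pi}_*\ul{\Q}$ induced on fibers by $\diag_n(\G)\into \G^n$, and the double complex structure is inherited from the \v{C}ech bicomplex~\eqref{eq:double} together with the internal differentials on cochain complexes.

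The main technical step to check carefully is the identification of stabilizers $\stab_{\Out(F_g)}(\sigma_{(\G,h)})\cong \Aut(\G)$, and the verification that this isomorphism intertwines the induced $\Out(F_g)$-actions on $\det(E(\G))$ and on the cochains of $\G^n$ with the natural $\Aut(\G)$-actions. By the simply transitive action of $\operatorname{hEq}(R_g,R_g)$ on homotopy markings of $\G$, the stabilizer of the isomorphism class of $(\G,h)$ consists of those $[f]\in \Out(F_g)$ for which there exists a graph automorphism $\phi\in \Aut(\G)$ with $\phi\circ h\simeq h\circ f$; the assignment $[f]\mapsto \phi$ gives the desired isomorphism. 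Under it, the induced action on edges (and hence on $\det(E(\G))$) and on $\G$ itself (hence on $\G^n$, $\diag_n(\G)$, and their cochains) is by $\phi$, as required.
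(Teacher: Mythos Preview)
Your approach is essentially the same as the paper's: apply Corollary~\ref{cor:geometric cech bicomplex} to $\overline{\pi}$ and $\overline{\pi}^{(r)}$ with $\Gp=\Out(F_g)$, then translate the \v{C}ech bicomplex into graph-complex notation via the identifications of simplex orbits, stabilizers, and determinants. Two points deserve tightening.

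First, the lemma asserts that the horizontal differential on the right-hand side is given by \emph{edge contraction}, and you do not justify this. Saying that ``the double complex structure is inherited from the \v{C}ech bicomplex~\eqref{eq:double}'' is not enough: the \v{C}ech differential is an alternating sum of restriction maps $\F_\tau\to\F_\sigma$ of the constructible sheaf $\F=R\overline{\pi}_*\ul{\Q}$, and one must identify these restrictions with the maps $C^*(\G^n)\to C^*((\G')^n)$ induced by an edge contraction $\G'\to \G$. The paper does this by invoking Proposition~\ref{prop:retraction on stars}, which supplies compatible retractions of $\overline{\pi}^{-1}(\st(\sigma_{(\G,h)}))$ onto the fiber over $\sigma^\circ_{(\G,h)}$ whose restriction to each neighboring open simplex is exactly $r\times\phi^n$ for the relevant edge contraction $\phi$; Remark~\ref{rmk:Cech differential using retractions} then converts this geometric input into the desired description of the sheaf restriction maps. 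Without this step, the claim about the differential is unproven.

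Second, your stabilizer argument tacitly uses that the assignment $[f]\mapsto\phi$ is well-defined, i.e., that $\phi$ is uniquely determined by $[f]$. This requires that $\Aut(\G)$ act \emph{freely} on the set of homotopy markings $R_g\to \G$, which holds because for $g\ge 2$ the group $\Aut(\G)$ already acts faithfully on $H_1(\G)$. The paper states this explicitly; you should too, since otherwise the map to $\Aut(\G)$ is only a surjection onto a quotient.
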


\begin{proof}
    We only prove the upper horizontal quasi-isomorphism. The argument for the lower horizontal map is analogous, and commutativity of the square is automatic by naturality of all constructions.

    We write $\Gp$ for $\Out(F_g)$ throughout. Applying Corollary \ref{cor:geometric cech bicomplex} to the map $\cU_{g,n}^{(2)} \to \CV_g^{(2)}$ gives a quasi-isomorphism with a \v{C}ech double-complex
    \[
    R\Gamma_c(\cU_{g,n}^{(2)}/\Gp ;\ul{\Q}) \simeq \bigoplus_{[\sigma_{(\G,h)}] \in P(\CV_g^{(2)})/\Gp} \left( R\Gamma( \G^n ; \ul{\Q} )\otimes \det{(\sigma_{(\G,h)})[1]}\right)^{\stab_\Gp(\sigma_{(\G,h)})}.
    \]
    Now, as explained in Diagram \eqref{diag:quotients by Out}, the quotient $\cU_{g,n}^{(2)}/\Gp$ is homeomorphic to $\Delta_{g,n}^{(2)}$, so the left-hand side of the last equivalence computes the cohomology we seek.

    Let us identify the right-hand side as a type of graph complex. Indeed, the ideal simplices in $\CV_g^{(2)}$ are in bijection with isomorphism classes of objects $(\G,h)\in I_{g}^{(2)}$, and the $\Gp$-action permutes them via its action on $h$. Since this action is transitive, orbits of isomorphism classes are in bijection with graphs $\G\in \Gamma_g^{(2)}$, with the marking $h$ forgotten. That is, $P(\CV_g^{(2)})/\Gp \cong \Iso(\Gamma_g^{(2)})$.
    
    Furthermore, since $g\geq 2$, the automorphism group $\Aut(\G)$ acts on the set of markings $R_g\to \G$ freely (it already acts faithfully on the homology of $\G$). Thus, by definition $I_g^{(2)}$, the $\Gp$-stabilizer of $[(\G,h)]\in \Iso(I_g^{(2)})$ is isomorphic to $\Aut(\G)$, naturally once we pick a representative $(\G,h)$. 
    Furthermore, $\stab_\Gp(\G,h)$ acts on the fiber $\G^n$ via $\Aut(\G)$.
    As for the determinant $\det(\sigma_{(\G,h)})$, the coordinates of the simplex are in bijection with edges $E(\G)$, compatibly under the action of $\Aut(\G)$. It therefore follows that the summands, groups and determinants are given as in the claimed presentation.

    Using the retraction on stars given in Proposition \ref{prop:retraction on stars}, 
    Remark \ref{rmk:Cech differential using retractions}
    identifies the \v{C}ech differentials with edge contraction on the space level.
    
    Lastly, the functor $\G \mapsto R\Gamma(\G^n;\ul{\Q})$ is naturally quasi-isomorphic to $\G \mapsto C^*(\G^n;\Q)$, e.g., 
    by \cite[Section 4]{petersen}. Therefore, we can replace the former for the latter in the \v{C}ech double-complex. Since $R\Gamma_c(\Delta_{g,n}^{(2)};\ul{\Q})$ is itself equivalent to $C^*_c(\Delta_{g,n}^{(2)})$, the latter is quasi-isomorphic to the claimed \v{C}ech double complex.

\end{proof}

Now, as promised, we use the preceding lemma to access the \emph{non-repeated marking locus} 
\[\Delta_g^{(2,nr)} := \Delta_g^{(2)}\setminus\Delta_g^{(2,r)}\]
and then complete the proof of Theorem \ref{thm:main}.

\begin{prop}[Double complex for $\Delta_{g,n}^{(2,nr)}$]\label{prop:our main double cx}
The compactly supported cohomology of the non-repeated marking locus $\Hc^*(\Delta_{g,n}^{(2,nr)};\Q)$ is computed by a double complex

\begin{equation} \label{eq:double complex - final form}
C^{p,q} = \bigoplus_{\substack{[\G]\in\Iso(\Gamma_g^{(2)})\\ |E(\G)|=p+1}} \left[C^q(\G^n,\diag_n(\G);{\Q})\otimes\det(E(\G))\right]^{\Aut(\G)}
\end{equation}
and whose differentials are given by edge contraction and the internal differential on each cochain complex.
\end{prop}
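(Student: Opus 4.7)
The plan is to derive Proposition~\ref{prop:our main double cx} from Lemma~\ref{lem:cech} via the cofiber sequence in compactly supported cohomology associated to the open-closed decomposition
\[
\Delta_{g,n}^{(2)} \;=\; \Delta_{g,n}^{(2,nr)} \,\sqcup\, \Delta_{g,n}^{(2,r)},
\]
in which $\Delta_{g,n}^{(2,r)}$ is closed and $\Delta_{g,n}^{(2,nr)}$ is its open complement. The associated distinguished triangle
\[
C^*_c(\Delta_{g,n}^{(2,nr)};\Q) \longrightarrow C^*_c(\Delta_{g,n}^{(2)};\Q) \longrightarrow C^*_c(\Delta_{g,n}^{(2,r)};\Q)
\]
exhibits $C^*_c(\Delta_{g,n}^{(2,nr)};\Q)$ as quasi-isomorphic to the mapping fiber of the right-hand restriction map.

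Next, I would apply Lemma~\ref{lem:cech}, whose commutative square identifies this restriction, up to horizontal quasi-isomorphism, with the natural morphism of graph-indexed \v{C}ech double complexes induced by the inclusions $\diag_n(\G)\hookrightarrow \G^n$. Since the mapping fiber is invariant under quasi-isomorphism, this yields
\begin{align*}
C^*_c(\Delta_{g,n}^{(2,nr)};\Q) &\simeq \operatorname{fib}\biggl( \bigoplus_{[\G]} \bigl[ C^*(\G^n;\Q) \otimes \det(E(\G))[1] \bigr]^{\Aut(\G)} \\
&\qquad \longrightarrow \bigoplus_{[\G]} \bigl[ C^*(\diag_n(\G);\Q) \otimes \det(E(\G))[1] \bigr]^{\Aut(\G)} \biggr).
\end{align*}

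To identify this mapping fiber with the double complex in \eqref{eq:double complex - final form}, I would observe that $(\G^n,\diag_n(\G))$ is a CW pair, so the restriction $C^*(\G^n;\Q)\to C^*(\diag_n(\G);\Q)$ is degreewise surjective for a suitable cochain model (cellular or singular). Hence the mapping fiber coincides with the honest kernel, which by definition is the relative cochain complex $C^*(\G^n,\diag_n(\G);\Q)$. Because $\Aut(\G)$ is a finite group and we work over $\Q$, the functor of $\Aut(\G)$-invariants is exact, so it commutes with the formation of this kernel and produces exactly the summands $\bigl[ C^q(\G^n,\diag_n(\G);\Q)\otimes\det(E(\G)) \bigr]^{\Aut(\G)}$ appearing in \eqref{eq:double complex - final form}. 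The horizontal edge-contraction differential and the vertical cochain differential are inherited directly from those in Lemma~\ref{lem:cech}.

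The main obstacle, such as it is, is bookkeeping: one must align the $[1]$-shift appearing in the \v{C}ech description with the $(p,q)$-indexing of \eqref{eq:double complex - final form}, so that graphs with $p+1$ edges contribute in \v{C}ech-degree $p$ while the $q$-grading tracks cochain degree. Given that Lemma~\ref{lem:cech} has already carried out these identifications for the two endpoints of the cofiber sequence, the remaining verification is a formal check of shifts and signs rather than a substantive difficulty.
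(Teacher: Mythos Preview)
Your proposal is correct and follows essentially the same approach as the paper's proof. The paper argues that both vertical arrows in the square of Lemma~\ref{lem:cech} are degreewise surjective, takes kernels, and invokes the $5$-lemma to conclude that the two kernels are quasi-isomorphic; you phrase the same argument in terms of mapping fibers of a distinguished triangle and their invariance under quasi-isomorphism, but the substance---surjectivity of restriction, exactness of $\Aut(\G)$-invariants over $\Q$, and identification of the kernel with relative cochains---is identical.
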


\begin{proof}
Both vertical arrows of the square \eqref{eq:cech for Delta repeated} are surjective (the right one since tensor products and invariants of a finite group on $\Q$-vector spaces are exact), and the $5$-lemma shows that the two respective kernels are also quasi-isomorphic.
The cohomology we seek is computed by the kernel of the left column, while the graph double-complex in the claim is the kernel of the right column. 
\end{proof}

\begin{proof}[Proof of Theorem \ref{thm:ss}]
The claim in our main theorem regards the cohomology of the moduli space of tropical curves $\Delta_{g,n}$, which parameterizes metric graphs of volume $1$ with vertex weights and marked vertices that satisfy certain genus and stability conditions. We will not define this space precisely, since \cite[Theorem 1.1]{cgp-marked} proves that
\[
\tilde{\Ho}^*(\Delta_{g,n};\Q) = \tilde{\Ho}^*_c(\Delta_{g,n};\Q) \cong \Hc^*(\Delta_{g,n}^{(2,nr)};\Q).
\]
Indeed, they show that the complement $\Delta_{g,n}\setminus \Delta_{g,n}^{(2,nr)}$ is contractible, and since it is furthermore compact, removing it does not alter the compactly supported cohomology. The first equality follows since $\Delta_{g,n}$ is compact.

It thus remains to exhibit the claimed spectral sequence computing $\Hc^*(\Delta_{g,n}^{(2,nr)};\Q)$. This will be the spectral sequence associated with the filtration of the double complex \eqref{eq:double complex - final form} by columns. Its $E_1$-page is
\[
E_1^{p,q} \cong \bigoplus_{\substack{[\G]\in\Iso(\Gamma_g^{(2)})\\ |E(\G)|=p+1}} \left[\Ho^q(\G^n,\diag_n(\G);{\Q})\otimes\det(E(\G))\right]^{\Aut(\G)}
\]

Excision in singular cohomology provides a natural isomorphism, for every finite graph $\G$,
\[
\Hc^*(\Conf_n(\G);\Q) \cong \Ho^*(\G^n,\diag_n(\G);\Q).
\]
Therefore, the terms of the $E_1$-page has the form claimed in the theorem.

In \cite[Proposition 2.1]{bibby-chan-gadish-yun-homology} we observe that $\Hc^*(\Conf_n(\G);\Q)$ is functorial and (proper) homotopy invariant. Since an edge contraction between finite $2$-connected graphs is a homotopy equivalence, it follows that the functor
\[
\G \mapsto \Hc^q(\Conf_n(\G);\Q)
\]
is a $\Gamma_g^{(2)}$-graph local system, and its associated 
graph complex is immediately seen to be the $q$-th row in the above $E_1$-page.

We furthermore prove in \cite[Lemma 2.4]{bibby-chan-gadish-yun-homology} that  $\Hc^*(\Conf_n(\G);\Q)$ is concentrated in degrees $n$ and $n-1$. This implies that the $E_1$-page is concentrated in rows $q=n$ and $q=n-1$, and the spectral sequence collapses on its $E_3$-page.
\end{proof}

\begin{cor}[Long exact sequence]\label{cor:les}
Let $\Gamma^{(2)}_g \leq \GrphCat_g$ denote the subcategory of $2$-connected graphs without bivalent vertices. The two graph complexes
\[
\GC^{*,q} := \GC^*(\Gamma^{(2)}_g; \Hc^q(\Conf_n(-);\Q))
\]
with $q=n$ and $n-1$ have cohomology $\GH^{*,q}$ that fits into a long exact sequence
\begin{equation}\label{eq:long exact sequence}
    \dots\to \GH^{i,n-1} \to \tilde\Ho^{i+n-1}(\Delta_{g,n};\Q) \to \GH^{i-1,n} \to
\GH^{i+1,n-1} \to \tilde\Ho^{i+n}(\Delta_{g,n};\Q) \to \dots
\end{equation}
\end{cor}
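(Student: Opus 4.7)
The plan is to extract the long exact sequence as a standard consequence of the two-row shape of the spectral sequence in Theorem \ref{thm:ss}. Applying that theorem, the $E_1$-page is concentrated in rows $q=n-1$ and $q=n$, with $d_1$ equal to the graph complex differential along each row. Taking cohomology, the $E_2$-page satisfies $E_2^{p,q} = \GH^{p,q}$ for $q\in\{n-1,n\}$ and vanishes in all other bidegrees.

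Given this shape, the only possibly nontrivial higher differential is
\[
d_2\colon E_2^{p,n}\longrightarrow E_2^{p+2,\,n-1},
\]
and $d_r = 0$ for $r\geq 3$ for degree reasons. Hence $E_\infty = E_3$, with
\[
E_\infty^{p,n} = \ker\!\bigl(d_2\colon \GH^{p,n}\to \GH^{p+2,\,n-1}\bigr),\qquad E_\infty^{p,n-1} = \coker\!\bigl(d_2\colon \GH^{p-2,\,n}\to \GH^{p,\,n-1}\bigr).
\]
The induced filtration on the abutment has at most two nonzero associated graded pieces in each total degree, so for every $m$ there is a short exact sequence
\[
0 \longrightarrow E_\infty^{m-n+1,\,n-1} \longrightarrow \tilde\Ho^m(\Delta_{g,n};\Q) \longrightarrow E_\infty^{m-n,\,n} \longrightarrow 0.
\]

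Next, I would splice these short exact sequences, using $d_2$ as the connecting homomorphism, into a single long exact sequence of the form $\cdots\to \GH^{p,n-1}\to \tilde\Ho^{p+n-1}(\Delta_{g,n};\Q)\to \GH^{p-1,n}\xrightarrow{d_2}\GH^{p+1,n-1}\to\cdots$. Setting $i = p$ reproduces \eqref{eq:long exact sequence} exactly. The only real content is checking that the connecting maps in the spliced sequence are identified (up to sign) with $d_2$ and that the edge maps agree with those induced by filtration; this is the standard two-row spectral sequence argument, so I do not anticipate any obstacle beyond careful index bookkeeping.
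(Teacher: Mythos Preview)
Your argument is correct and is exactly the paper's approach: the paper's proof simply invokes the standard fact that a spectral sequence concentrated in two adjacent rows is equivalent to a long exact sequence, and you have spelled out that standard fact in detail with correct indexing.
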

\begin{proof}
A spectral sequence with two nonzero rows is equivalent to a long exact sequence. Since the $E_1$-page of the sequence in Theorem \ref{thm:ss} is concentrated in rows $n$ and $n-1$, it gives rise to the claimed long exact sequence.
\end{proof}

\appendix

\section{Computational aspects}\label{appendix} 
The differentials in the graph complex can be better understood and simplified by accounting for the action of graphs' automorphism groups, as explained next.

Fix a full subcategory of graphs $\A$ satisfying the diamond property. Let us work with the presentation of the graph complex as a direct sum over isomorphism classes,
\[
GC(\A;V) \cong \bigoplus_{[\G]\in \A^{\cong}} \big(V_{\G}\otimes \det(E(\G))\big)^{\Aut(\G)}.
\]
As such, the graph differential breaks up into blocks
\[
d_{\G'}^\G: \big(V_{\G'}\otimes \det(E(\G'))\big)^{\Aut(\G')} \to \big(V_{\G}\otimes \det(E(\G))\big)^{\Aut(\G)}
\]
that are nonzero whenever $\G'\cong \G/e$ for some $e\in E(\G)$.

\begin{rmk}\label{rem:a-paper-differential}
$d^\G_{\G'}$ is given as follows, for fixed 
$\G$ and $\G'$ with $|E(\G)| = p+1 = |E(\G')|+1$. Choose, for each $e\in \G$ such that $\G/e\cong \G'$, a  particular morphism $\phi_e\col \G\to \G'$ that factors (indeed uniquely) as $\G\to \G/e\xrightarrow{\overline{\phi}_e} \G'$ for an isomorphism $\overline{\phi}_e$.  Then define \begin{equation}\label{eq:block differential}
\ol{d}_e = V_{\phi_e} \otimes (e\wedge \det \overline{\phi}_e).
\end{equation}
With this,  $d^\G_{\G'} = \sum_e \ol{d}_e$, summing over those edges $e$ for which $\G/e\cong \G'$.  At the moment, $d^\G_{\G'}$ is a map
\[V_{G'}\otimes \det E(\G') \to V_{\G} \otimes \det E(\G)\]
but a short verification shows that it induces a map, independent of all choices of $\phi_e$, 
from the $\Aut(\G')$-invariants of the domain to the $\Aut(\G)$-invariants of the codomain.
\end{rmk}

\begin{lem}
    Let $\G$ and $\G'$ be $\mathcal{A}$-graphs with $|E(\G)| = |E(\G')|+1$.  Let $E_{\G'}^\G \subseteq E(\G)$ be the edges $e$ such that $\G/e \cong \G'$. This set is $\Aut(\G)$-invariant, and let $[e_1],\dots,[e_k]$ be its set of orbits. 
    Then the differential $d_{\G'}^{\G}$ can be written in the following equivalent ways:
    \begin{equation}\label{eq:differenial}
    d_{\G'}^{\G} = \sum_{i=1}^k \left(
    \sum_{e'_i\in [e_i]} \sigma_{e'_i\to e_i}^*\right)\circ \ol{d}_{e_i} = \left(\frac{1}{|\Aut(\G)|} \sum_{\sigma\in \Aut(\G)} \sigma^*\right) \circ \sum_{i=1}^k |[e_i]| \cdot \ol{d}_{e_i}
    \end{equation}
    where the first internal sum goes over the elements in the orbit $[e_i]$ and  $\sigma_{e'_i\to e_i}\in \Aut(\G)$ is any choice of transporter $\sigma_{e'_i\to e_i}.e'_i=e_i$. Here, the map $\sigma^*$ is the automorphism of $V_\G\otimes\det(E(\G))$ induced by the action of $\sigma$ on $\G$, as in~\eqref{eq:intertwiner}, and $\ol{d}_e$ is defined in \eqref{eq:block differential}.
\end{lem}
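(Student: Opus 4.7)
The plan is to decompose the defining sum $d^\G_{\G'} = \sum_{e\in E^\G_{\G'}}\ol{d}_e$ according to the $\Aut(\G)$-orbits on $E^\G_{\G'}$, then relate these orbit sums to averages over $\Aut(\G)$. First, one verifies that $E^\G_{\G'}$ is $\Aut(\G)$-invariant: if $\sigma\in\Aut(\G)$ and $\G/e\cong \G'$, then $\sigma$ induces an isomorphism $\G/e\xrightarrow{\sim}\G/\sigma(e)$, so $\G/\sigma(e)\cong \G'$. This justifies the decomposition $\sum_e \ol{d}_e = \sum_{i=1}^k\sum_{e'\in[e_i]}\ol{d}_{e'}$.

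The heart of the first equality is the identity $\ol{d}_{e'}=\sigma^*_{e'\to e_i}\circ \ol{d}_{e_i}$ on $\Aut(\G')$-invariants. Setting $\sigma=\sigma_{e'\to e_i}$ and using the independence of $\ol{d}_{e'}$ on the choice of $\phi_{e'}$ noted in Remark~\ref{rem:a-paper-differential}, I would take $\phi_{e'} := \phi_{e_i}\circ\sigma$. Then functoriality of $\V$ gives $\V_{\phi_{e'}} = \V_\sigma\circ\V_{\phi_{e_i}}$, and the induced isomorphism $\ol{\sigma}\col\G/e'\xrightarrow{\sim}\G/e_i$ yields $\ol{\phi}_{e'} = \ol{\phi}_{e_i}\circ\ol{\sigma}$, which matches the action of $\det\sigma$ on the wedge factor $e_i\wedge\det\ol{\phi}_{e_i}(-)$ appearing in the definition of $\ol{d}_{e_i}$. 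Summing over $e'\in[e_i]$ and over $i$ then delivers the first equality.

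For the second equality, the key observation is that for $h\in H_i := \stab_{\Aut(\G)}(e_i)$, the composition $h^*\circ \ol{d}_{e_i}$ equals $\ol{d}_{e_i}$ on $\Aut(\G')$-invariants, since it arises from the alternative choice $\phi_{e_i}\circ h$, which still factors through the quotient $\G\to\G/e_i$ because $h(e_i)=e_i$. Decomposing $\Aut(\G)$ into cosets of $H_i$ with the transporters $\{\sigma_{e'\to e_i}\}_{e'\in[e_i]}$ as representatives (valid by the orbit--stabilizer relation $|\Aut(\G)| = |H_i|\cdot|[e_i]|$), I would obtain
\[
\sum_{\sigma\in\Aut(\G)}\sigma^*\circ\ol{d}_{e_i} \;=\; |H_i|\sum_{e'\in[e_i]}\sigma_{e'\to e_i}^*\circ \ol{d}_{e_i},
\]
and rescaling and summing over $i$ produces the second equality. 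The main subtlety throughout is notational rather than conceptual: identities between the various $\ol{d}_e$'s must be interpreted as holding on the $\Aut(\G')$-invariant subspace rather than as maps of the full tensor products, and this is where the independence statement of Remark~\ref{rem:a-paper-differential} is repeatedly invoked.
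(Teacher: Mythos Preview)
Your proposal is correct and follows essentially the same approach as the paper: both arguments establish the key identity $\ol{d}_{e'}=\sigma_{e'\to e_i}^*\circ\ol{d}_{e_i}$ on $\Aut(\G')$-invariants (the paper via a commutative diagram of graph morphisms, you via the explicit choice $\phi_{e'}=\phi_{e_i}\circ\sigma$ and contravariant functoriality, which are the same computation), and both derive the second expression by decomposing $\Aut(\G)$ into $\stab(e_i)$-cosets and using orbit--stabilizer. The only cosmetic difference is that the paper phrases the coset computation as $(\tau\sigma)^*\ol{d}_{e_i}=\ol{d}_{\sigma^{-1}(e_i)}$ directly, whereas you first isolate $h^*\ol{d}_{e_i}=\ol{d}_{e_i}$ for $h\in\stab(e_i)$ and then sum; these are equivalent.
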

Note that the map $\ol{d}_e$ depends on a choice of morphism $\phi_e:\G\to \G'$ contracting $e$. But since any two such choices differ by some automorphism of $\G'$, the map $\ol{d}_e$ is in fact independent of choices on the $\Aut(\G')$-invariants.
However, one should keep in mind that $\ol{d}_e$ is an isomorphism before restricting to invariants (albeit one that depends on choices).
\begin{proof}
    The set $E_{\G'}^\G$ is $\Aut(\G)$-invariant since an automorphism $\sigma$ of $\G$ induces an isomorphism $\ol{\sigma}: \G/e \isoto \G/\sigma(e)$.
    Grouping the edges in $E_{\G'}^\G$ by orbits, the differential has the form
    \[
    d_{\G'}^\G = \sum_{i=1}^k \sum_{e'_i\in [e_i]} \ol{d}_{e'_i} = \sum_{i=1}^k \sum_{e'_i\in [e_i]} \ol{d}_{\sigma_{e'_i\to e_i}^{-1}(e_i)}.
    \]
    We claim that for every $\sigma\in \Aut(\G)$ there exist a choice of $\ol{d}_{\sigma(e)}$ such that $\sigma^* \ol{d}_{\sigma(e)} = \ol{d}_{e}$.
    
    Indeed, the commutative diagram in $\GrphCat_g$
    \[
    \xymatrixcolsep{7pc}\xymatrix{
    \G' & \G/\sigma(e) \ar[l]_\sim & \G \ar[l] \\
     & \G/e \ar[ul]_\sim \ar[u]^{\ol{\sigma}}_\cong & \G \ar[u]^\sigma_\cong \ar[l]
     }
    \]
    induces a commutative diagram of vector spaces
    \[
    \xymatrix{
    V_{\G'}\otimes \det(E(\G')) \ar[dr]^-\sim \ar[r]^-\sim & V_{\G/\sigma(e)}\otimes \det(E(\G/\sigma(e))) \ar[d]^\cong_{\ol{\sigma}^*} \ar[r] &  V_{\G}\otimes \det(E(\G)) \ar[d]_{\sigma^*}^\cong \\
     & V_{\G/e}\otimes \det(E(\G/e)) \ar[r] & V_{\G}\otimes \det(E(\G)).
     }
    \]
    By definition, the composition of the bottom two arrows is the given choice for $\ol{d}_{e}$, while the composition of the top two is one for $\ol{d}_{\sigma(e)}$. From this, the first expression for $d_{\G'}^\G$ follows.

    For the second, observe the the sum over all $\Aut(\G)$ can be grouped by right $\stab(e_i)$-cosets
    \[
    \frac{1}{|\Aut(\G)|}\sum_{\sigma \in \Aut(\G)} \sigma^*  = \frac{1}{|[e_i]|} \sum_{[\sigma]\in \Aut(\G)/\stab(e_i)} \frac{1}{|\stab(e_i)|} \sum_{\tau\in \stab(e_i)} (\tau\sigma)^*.
    \]
    Composing, $(\tau\sigma)^*\ol{d}_{e_i} = \ol{d}_{\sigma^{-1}\tau^{-1}(e_i)} = \ol{d}_{\sigma^{-1}(e_i)}$ when considered as maps from $\Aut(\G')$-invariants, and where the second equality uses $\tau(e_i)=e_i$. We therefore deduce that for every $1\leq i\leq k$,
    \[
    \frac{1}{|\Aut(\G)|}\sum_{\sigma \in \Aut(\G)} \sigma^*\circ \ol{d}_{e_i} = \frac{1}{|[e_i]|} \sum_{[\sigma]\in \Aut(\G)/\stab(e_i)} \ol{d}_{\sigma^{-1}e_i}
    \]
    from which the second claimed description of $d_{\G'}^\G$ immediately follows.
\end{proof}

There are several scenarios in which terms in graph complex are guaranteed to not contribute to the cohomology. These are particularly fortuitous in the genus $3$ graph complex.

\begin{prop}[Differential of special edge]\label{prop:differential special edge}
Suppose that $\G$ and $\G'$ are genus $g$ graphs, and $e\in E(\G)$ is an edge such that $\G/e\cong \G'$.
Then there are injective group homomorphisms
\begin{center}
\begin{tikzpicture}
\node (G) at (-1.5,1.5) {$\Aut(\G)$};
\node (H) at (1.5,1.5) {$\Aut(\G').$};
\node (stab) at (0,3) {$\stab_{\Aut(\G)}(e)$};
\draw[left hook->] (stab) -- node[above,pos=0.7] {\footnotesize $i_e$} (G);
\draw[right hook->] (stab) -- node[above,pos=0.7] {\footnotesize $h_e$} (H);
\end{tikzpicture}
\end{center}
Further assume that for every other $e'\in E(\G)$, if $\G/e'\cong \G/e$ then there exists an automorphism $\varphi\in \Aut(\G)$ such that $\varphi(e')=e$.
\begin{enumerate}
\item\label{lemma-case:injective} If $i_e$ is surjective, i.e., $\stab_{\Aut(\G)}(e)=\Aut(\G)$, then $d_{\G'}^{\G}$ is injective.
\item\label{lemma-case:surjective} If $h_e$ is surjective and $\stab_{\Aut(\G)}(e)$ is a normal subgroup of $\Aut(\G)$, then $d_{\G'}^{\G}$ is surjective.
\end{enumerate}
\end{prop}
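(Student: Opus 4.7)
The plan is to reduce the statement to a fact about the ambient vector space map $\ol{d}_e$ (which is an isomorphism, by Remark~\ref{rem:a-paper-differential}) together with an intertwining identity between the actions of $\stab(e)$ and $\Aut(\G')$. Our hypothesis that every $e'$ with $\G/e'\cong \G'$ lies in the $\Aut(\G)$-orbit of $e$ means that the set of orbits $\{[e_1],\dots,[e_k]\}$ in the preceding Lemma is the singleton $\{[e]\}$, so the second formula there specializes to
\[
d_{\G'}^{\G} \;=\; |[e]| \cdot \pi_\G \circ \ol{d}_e,
\]
where $\pi_\G = \tfrac{1}{|\Aut(\G)|}\sum_{\sigma \in \Aut(\G)} \sigma^*$ is the averaging projection onto $\Aut(\G)$-invariants.

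The key technical input I would establish next is an \emph{intertwining identity}: for every $\tau \in \stab_{\Aut(\G)}(e)$,
\[
\tau^* \circ \ol{d}_e \;=\; \ol{d}_e \circ h_e(\tau)^*.
\]
I would derive this from the equality $\phi_e \circ \tau = h_e(\tau) \circ \phi_e$ of morphisms in $\GrphCat_g$, which holds because $\tau$ fixes $e$ and therefore descends to an automorphism $\bar\tau$ of $\G/e$ corresponding, under the isomorphism $\G/e \cong \G'$, to $h_e(\tau)$; applying the contravariant functor $V\otimes \det E$ then yields the identity, with the factor $e\wedge(-)$ commuting with $\tau^*$ precisely because $\tau$ fixes $e$. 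A direct consequence is that $\ol{d}_e$ carries $(V_{\G'}\otimes \det E(\G'))^{\Aut(\G')}$ into the $\mathrm{image}(h_e)$-invariant subspace of $V_\G \otimes \det E(\G)$.

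For Case~\ref{lemma-case:injective}, the hypothesis $\stab(e)=\Aut(\G)$ gives $|[e]|=1$ and ensures $\ol{d}_e$ sends $\Aut(\G')$-invariants into $\Aut(\G)$-invariants; hence $\pi_\G$ acts as the identity on this image and $d_{\G'}^{\G}$ coincides with the restriction of the isomorphism $\ol{d}_e$, which is injective. For Case~\ref{lemma-case:surjective}, given $w\in(V_\G\otimes\det E(\G))^{\Aut(\G)}$, set $v_0:=\ol{d}_e^{-1}(w)$; for any $\alpha\in \Aut(\G')$ the surjectivity of $h_e$ produces $\tau\in\stab(e)$ with $h_e(\tau)=\alpha$, and the intertwining identity together with $\tau^* w=w$ gives $\ol{d}_e(\alpha^* v_0) = \tau^* w = w = \ol{d}_e(v_0)$, forcing $\alpha^* v_0 = v_0$. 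Thus $v_0$ is $\Aut(\G')$-invariant, and $d_{\G'}^{\G}(v_0) = |[e]|\cdot\pi_\G(w) = |[e]|\cdot w$, so $v_0/|[e]|$ is a preimage of $w$.

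The main obstacle I anticipate is the careful verification of the intertwining identity with the signed determinantal factor, tracking how the permutations of $E(\G)$ fixing $e$ interact with the wedge. I note that normality of $\stab(e)$ in $\Aut(\G)$ does not visibly enter the surjectivity argument as I have sketched it; I would therefore want to double-check whether it is indispensable or is included for structural clarity (for instance, to streamline the verification via the induced action of the quotient $\Aut(\G)/\stab(e)$ on $\stab(e)$-invariants).
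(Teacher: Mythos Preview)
Your argument is correct and proceeds along the same lines as the paper's: both start from the single-orbit specialization of the preceding Lemma, $d_{\G'}^{\G} = |[e]|\cdot \pi_\G \circ \ol d_e$, and both handle Case~(1) identically, by observing that when $\stab(e)=\Aut(\G)$ the averaging projector is the identity on the image of $\ol d_e$.

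The difference lies in Case~(2). The paper packages the argument via the action of the quotient $Q=\Aut(\G)/\stab(e)$ on $(V_\G\otimes\det E(\G))^{\stab(e)}$: surjectivity of $h_e$ makes $\ol d_e$ an isomorphism from $\Aut(\G')$-invariants onto $\stab(e)$-invariants, and the bottom map $\sum_{[\sigma]}\sigma^*$ is then a scalar multiple of the projector onto $Q$-invariants, hence surjective. Normality of $\stab(e)$ is invoked precisely so that $Q$ is a group acting on $\stab(e)$-invariants. Your direct preimage argument bypasses this: you pull back $w$ through $\ol d_e^{-1}$ and use the intertwining identity with surjectivity of $h_e$ to land in $\Aut(\G')$-invariants, never needing $Q$ to be a group. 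Your suspicion is therefore well founded---normality is not strictly required for the conclusion, and your route is a mild sharpening. The paper's formulation has the advantage of conceptual transparency (the differential is visibly ``averaging over $Q$''), while yours is more economical in hypotheses. Your flagged obstacle, checking the determinantal sign in the intertwining identity, is genuine but routine: since $\tau$ fixes $e$, the wedge $e\wedge(-)$ commutes with $\det_\tau$ on $\det E(\G/e)$, matching $\det_{h_e(\tau)}$ under the identification $\bar\phi_e$.
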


\begin{proof}
On the graph complex, the differential $d_{\G'}^{\G}$ fits into a commutative diagram
$$
\xymatrixcolsep{8pc}\xymatrixrowsep{3pc}\xymatrix{
V_{\G'}\otimes \det(E(\G'))^{\Aut(\G')} \ar[r]^{d_{\G'}^\G} \ar@{^(->}[d]_{\ol{d}_e} & V_{\G}\otimes \det(E(\G))^{\Aut(\G)} \ar@{=}[d] \\
V_{\G}\otimes \det(E(\G))^{\stab(e)} \ar[r]^{\displaystyle{\sum_{[\sigma]\in \Aut(\G)/\stab(e)} \sigma^*}} & V_{\G}\otimes \det(E(\G))^{\Aut(\G)}.
}
$$

In case \eqref{lemma-case:injective}, the hypothesis implies that the bottom arrow can be taken to be the identity. Therefore the differential is simply the restriction of the isomorphism $\ol{d}_e$ to $\Aut(\G')$-invariants.

In case \eqref{lemma-case:surjective}, the quotient group $Q = \Aut(\G)/\stab(e)$ acts on the invariants
\[
(V\otimes \det(E(\G)))^{\stab(e)} \text{ and the $Q$-invariants are exactly } (V\otimes \det(E(\G)))^{\Aut(\G)}.
\]
In particular, the differential $d_{\G'}^{\G}$ is the composition of the isomorphism $\ol{d}_e$ with a scalar multiple of the projector onto the $Q$-invariants, and is thus surjective.
\end{proof}

Knowing these facts about the blocks of the graph differential, one may prune the graph complex and reduce the number of graphs that may contribute to cohomology.
We applied this to the case of genus $g=3$ as the following two examples detail.

\begin{exmp}\label{ex:genus3 injective diff}
Recall the graph complexes of interest here are those associated to $\A=\Gamma_3^{(2)}$ and $\Out(F_3)$-representation $V=\Hc^q(\Conf_n(R_3);\Q))$ where $q=n$ or $q=n-1$.
Consider the graphs $G=\goggles$ and $G'=\banana$ depicted in Figure \ref{fig:banana-goggles}, with the only contractible edge $e$ of $\goggles$ indicated.
Every automorphism of $\goggles$ stabilizes the edge $e$.
Thus, by Proposition \ref{prop:differential special edge}\eqref{lemma-case:injective}, the graph complex differential $d_{\banana}^{\goggles}$ is injective.

It follows that the graph cohomology is concentrated in degrees $p=4$ and $p=5$, and \eqref{eq:long exact sequence} breaks up into
\begin{align}
    \GH^{4,n-1} \cong &H^{n+3}(\Delta_{3,n};\Q), \nonumber\\ 
    0 \to \GH^{5,n-1} \to &H^{n+4}(\Delta_{3,n};\Q) \to \GH^{4,n} \to 0 \text{, and} \label{eq:GH5n}\\
    &H^{n+5}(\Delta_{3,n};\Q) \cong \GH^{5,n}. \nonumber
\end{align}
\begin{figure}[ht]
\begin{tikzpicture}
\node at (0,-.15) {};
\tikzstyle{every node}=
[draw,circle,fill=black,minimum size=2pt,inner sep=0pt];
\node (0) at (0,0) {};
\node (1) at (0,.8) {};
\draw[-] (0) edge[bend right=85, min distance=5mm] (1);
\draw[-] (0) edge[bend right=45] (1);
\draw[-] (0) edge[bend left=45] (1);
\draw[-] (0) edge[bend left=85, min distance=5mm] (1);
\end{tikzpicture}
\qquad
\begin{tikzpicture}
\tikzstyle{every node}=
[draw,circle,fill=black,minimum size=2pt,inner sep=0pt];
\node (l) at (0,0) {};
\node (c) at (0.5,.8) {};
\node (r) at (1,0) {};
\tikzstyle{every node}=[scale=.7];
\draw[-] (l)-- node[below] {$e$} (r);
\draw[-] (l) edge[bend right=25] (c);
\draw[-] (r) edge[bend right=25] (c);
\draw[-] (l) edge[bend left=25] (c);
\draw[-] (r) edge[bend left=25] (c);
\end{tikzpicture}
\caption{A pair of graphs and edge $e$ satisfying hypotheses of Proposition \ref{prop:differential special edge}\eqref{lemma-case:injective}. See Example \ref{ex:genus3 injective diff}.}
\label{fig:banana-goggles}
\end{figure}
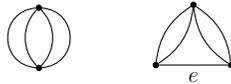

\end{exmp}

\begin{exmp}\label{ex:genus3 surjective diff}
Consider the graphs $G=\can$ and $G'=\goggles$ depicted in Figure \ref{fig:can-goggles}, with one of the two contractible edges of $\can$ labeled by $e$.
The only automorphism of $\can$ which does not stabilize $e$ is a (vertical) reflection, thus the stabilizer subgroup $\stab(e)$  has index 2 in $\Aut(\can)$, hence is a normal subgroup. 
Furthermore, every automorphism of $\goggles$ can be obtained from an automorphism of $\can$ stabilizing $e$ through the edge contraction. 
Thus, by Proposition \ref{prop:differential special edge}\eqref{lemma-case:surjective}, the graph complex differential $d_{\goggles}^{\can}$ is surjective.

This surjectivity allows us to prune the graph complex, simplifying the calculation of $H^{n+5}(\Delta_{3,n};\Q)$ as in \eqref{eq:GH5n}. 
That is, instead of computing the cokernel of the graph complex differential
\[d_{\goggles}^{\can}\oplus d_{\goggles}^{\kfour} :
(V\otimes \det(E(\goggles)))^{\Aut(\goggles)} \to
(V\otimes \det(E(\can)))^{\Aut(\can)} \oplus
(V\otimes \det(E(\kfour)))^{\Aut(\kfour)}
\]
one can compute the cokernel of a \emph{modified} differential
\[d' :
(V\otimes \det(E(\goggles)))^{\Aut(\goggles)} \to
(V\otimes \det(E(\kfour)))^{\Aut(\kfour)}.
\]
In particular, this modified differential can be represented by a significantly smaller matrix than the original one.
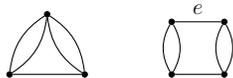
\begin{figure}[ht]\begin{tikzpicture}
\tikzstyle{every node}=
[draw,circle,fill=black,minimum size=2pt,inner sep=0pt];
\node (l) at (0,0) {};
\node (c) at (0.5,0.8) {};
\node (r) at (1,0) {};
\tikzstyle{every node}=[scale=.7];
\draw[-] (l) -- (r);
\draw[-] (l) edge[bend right=25] (c);
\draw[-] (r) edge[bend right=25] (c);
\draw[-] (l) edge[bend left=25] (c);
\draw[-] (r) edge[bend left=25] (c);
\end{tikzpicture}
\qquad
\begin{tikzpicture}
\tikzstyle{every node}=
[draw,circle,fill=black,minimum size=2pt,inner sep=0pt];
\node (bl) at (0,0) {};
\node (br) at (0,.7) {};
\node (tl) at (.7,0) {};
\node (tr) at (.7,.7) {};
\tikzstyle{every node}=[scale=.7];
\draw[-] (tl) edge[bend right=30] (tr);
\draw[-] (tl) edge[bend left=30] (tr);
\draw[-] (bl) edge[bend right=30] (br);
\draw[-] (bl) edge[bend left=30] (br);
\draw[-] (bl) edge (tl);
\draw[-] (br) edge node[above] {$e$} (tr);
\end{tikzpicture}
\caption{A pair of graphs and edge $e$ satisfying hypotheses of Proposition \ref{prop:differential special edge}\eqref{lemma-case:surjective}. See Example \ref{ex:genus3 surjective diff}.}
\label{fig:can-goggles}
\end{figure}

\end{exmp}

The outcome of applying the previous examples to generate data is discussed in the introduction; see Table \ref{table:genus3}. Our method allows one to compute individual multiplicities of irreducible representations: the multiplicities of the sign and trivial representations are displayed in Table \ref{table:sign_trivial} for $n\leq 13$.
These particular representations appear in the homology of hairy graph complexes, for which partial data was obtained by Khoroshkin--Willwacher--\v{Z}ivkovi\'c; see column 3 in Figures 1 (left) and 2 (left) of \cite{KWZ}.  They are consistent with Arone-Turchin's calculations of Euler characteristics of the hairy graph complexes $\mathrm{HGC}_{m,n}$ for $m$ odd and $n$ even and $m$ and $n$ both even \cite[Table A.3 and Table A.7]{arone-turchin-graph-complexes}.  To be clear, Euler characteristics do not provide a significant consistency check for our calculations, since our calculations took information on Euler characteristics---more precisely, the formula for the top-weight Euler characteristic of $\cM_{g,n}$~\cite{cfgp-sn}---as input.  So we are verifying that the two Euler characteristic computations in \cite{arone-turchin-graph-complexes} and in \cite{cfgp-sn} agree with each other. 
On the other hand, V.~Turchin observed to us that in the data in Table~\ref{table:sign_trivial} for $n\le 13$, all nonzero homology groups appear in equal parity; that is, the total homology rank equals the unsigned Euler characteristic. We do not know whether this statement is true for all $n$ The corresponding statement for genus $2$ {\em does} hold for all $n$, see 
the discussion and further references in \cite[Remark 3.4]{bibby-chan-gadish-yun-homology}.  On the other hand, it is evident from Table~\ref{table:genus3} that there are partitions $\lambda\vdash n$ other than $(n)$ and $(1^n)$ for which cancellation does occur in the calculation of the $\lambda$-isotypical component of the Euler characteristic.

\begin{table}[hbt]
{\footnotesize
\begin{tabular}{|c|r|r|r|}
\cline{2-4}
\multicolumn{1}{c|}{
\begin{tikzpicture}[scale=.2,baseline=1pt]
\node at (-1,0) {\footnotesize$n$};
\node at (1,1) {\footnotesize$i$};
\draw[-] (-3,2) -- (2,-1);
\end{tikzpicture}}
 & 0 & 1 & 2 \\
\cline{2-4}
\hline
$1$ & 0 & 1 & 0 \\
\hline
$2$ & 0 & 0 & 0 \\
\hline
$3$ & 1 & 0 & 0 \\
\hline
$4$ & 0 & 0 & 0 \\
\hline
$5$ & 0 & 1 & 0 \\
\hline
$6$ & 0 & 1 & 0 \\
\hline
$7$ & 1 & 0 & 0 \\
\hline
$8$ & 1 & 0 & 1 \\
\hline
$9$ & 0 & 1 & 0 \\
\hline
$10$ & 0 & 3 & 0 \\
\hline
$11$ & 2 & 0 & 0 \\
\hline
$12$ & 2 & 0 & 2 \\
\hline
$13$ & 0 & 3 & 0 \\
\hline
\end{tabular}
\qquad
\begin{tabular}{|c|r|r|r|}
\cline{2-4}
\multicolumn{1}{c|}{
\begin{tikzpicture}[scale=.2,baseline=1pt]
\node at (-1,0) {\footnotesize$n$};
\node at (1,1) {\footnotesize$i$};
\draw[-] (-3,2) -- (2,-1);
\end{tikzpicture}}
 & 0 & 1 & 2 \\
\cline{2-4}
\hline
$1$ & 0 & 1 & 0 \\
\hline
$2$ & 0 & 0 & 0 \\
\hline
$3$ & 0 & 0 & 0 \\
\hline
$4$ & 0 & 0 & 0 \\
\hline
$5$ & 0 & 0 & 0 \\
\hline
$6$ & 0 & 0 & 0 \\
\hline
$7$ & 0 & 0 & 0 \\
\hline
$8$ & 0 & 1 & 0 \\
\hline
$9$ & 1 & 0 & 1 \\
\hline
$10$ & 0 & 3 & 0 \\
\hline
$11$ & 1 & 0 & 2 \\
\hline
$12$ & 0 & 3 & 0 \\
\hline
$13$ & 2 & 0 & 2 \\
\hline
\end{tabular}
}
\bigskip

\caption{Multiplicities of trivial (left) and sign (right) representations in $\rH^{n+5-i}(\Delta_{3,n};\Q)$. Equivalently, by Proposition~\ref{prop:three-comparisons}: dimensions of homology groups for $\mathrm{HGC}_{m,N}$ for $m$ odd and $N$ even (left) and for $m$ and $N$ both even (right).}
\label{table:sign_trivial}
\end{table}

%\bibliographystyle{alpha}
%\bibliography{my}
\end{document}